\documentclass[journal]{IEEEtran}

\hyphenation{op-tical net-works semi-conduc-tor}

\usepackage{tikz}
\usetikzlibrary{patterns}
\usepackage{pgfplots}
\pgfplotsset{compat=1.15}
\usepackage{graphicx}

\usepgfplotslibrary{fillbetween}

\usepackage{amssymb}
\usepackage{amsmath}
\usepackage{amsthm}
\usepackage{array}
\usepackage{bm}
\usepackage{dsfont}

\theoremstyle{definition}

\newtheorem{theorem}{Theorem}
\newtheorem{lemma}[theorem]{Lemma}
\newtheorem{remark}{Remark}

\usepackage{algorithm}
\usepackage{algorithmic} 
\usepackage{multirow}

\usepackage{eurosym}
\usepackage{xcolor,colortbl}

\usepackage{pifont}

\newcommand\numberthis{\addtocounter{equation}{1}\tag{\theequation}}

\newcommand{\norm}[1]{\left\lVert#1\right\rVert}
\usepackage{bbm}

\usepackage{subfigure}
\usepackage{pgfkeys}
    \def\addlegendimage{\csname pgfplots@addlegendimage\endcsname}
   
\setlength{\abovedisplayskip}{3pt}
\setlength\abovecaptionskip{-0.1\baselineskip}
%
%
%





\begin{document}


\title{Embedding Dependencies Between Wind Farms in Distributionally Robust Optimal Power Flow}

\author{Adriano~Arrigo,
        Jalal~Kazempour,~\IEEEmembership{Senior~Member,~IEEE,}
        Zacharie~De~Grève,~\IEEEmembership{Member,~IEEE,}
        Jean-François~Toubeau,~\IEEEmembership{Member,~IEEE,}
        and~François~Vallée,~\IEEEmembership{Member,~IEEE} \vspace{-0.6cm}
\thanks{A. Arrigo, Z. De Grève and F. Vallée are with the Power Systems and Markets Research group at University of Mons, 31, Boulevard Dolez, 7000, Mons, Belgium. E-mails : \{adriano.arrigo, zacharie.degreve, francois.vallee\}@umons.ac.be.}
\thanks{J.-F. Toubeau is with the TME Branch, Energy Institute, University of Leuven (KU Leuven), 13, Oude Markt, 3000, Leuven, Belgium. E-mail: jean-francois.toubeau@kuleuven.be.}%
\thanks{J. Kazempour is with the Department of Wind and Energy Systems at Technical University of Denmark, 325, Elektrovej, 2800 Kgs. Lyngby, Denmark. E-mail: jalal@dtu.dk.}}




\maketitle

\begin{abstract}
The increasing share of renewables in the electricity generation mix comes along with an increasing uncertainty in power supply. In the recent years, distributionally robust optimization has gained significant interest due to its ability to make informed decisions under uncertainty, which are robust to misrepresentations of the distributional information (e.g., from probabilistic forecasts). This is achieved by introducing an ambiguity set that describes the potential deviations from an empirical distribution of all uncertain parameters. However, this set typically overlooks the inherent dependencies of uncertainty, e.g., spatial dependencies of weather-dependent energy sources. This paper goes beyond the state-of-the-art models by embedding such dependencies within the definition of ambiguity set. In particular, we propose a new copula-based ambiguity set which is tailored to capture any type of dependencies. The resulting problem is reformulated as a conic program which is kept generic such that it can be applied to any decision-making problem under uncertainty in power systems. \color{black} Given the Optimal Power Flow (OPF) problem as one of the main potential applications, we illustrate the performance of our proposed distributionally robust model applied to \textit{i)} a DC-OPF problem for a meshed transmission system and \textit{ii)} an AC-OPF problem using \textit{LinDistFlow} approximation for a radial distribution system. \color{black} 
\end{abstract}

\begin{IEEEkeywords}
Distributionally robust optimization, copula, dependencies, optimal power flow. 
\end{IEEEkeywords}

\IEEEpeerreviewmaketitle



\vspace{-0.3cm}
\section{Introduction}
\label{Sect:Introduction}



\IEEEPARstart{T}{he} 
continuous balance between power injections and offtakes is paramount to ensure the safe operation of power systems, however it is being challenged by the   uncertainty introduced by the weather-dependent renewable power generation \cite{NERC}. To cope with such an uncertainty, it is crucial to develop uncertainty-aware decision-making tools for operational and planning purposes. While the power system operators traditionally use a deterministic optimization approach by considering a single-point forecast of renewable power generation, stochastic approaches account for a probabilistic forecast, allowing to make more informed operational and planning decisions (depending on the forecast accuracy) \cite{Morales2013Integrating}. 

The power system research community has put a lot of efforts in the last decades for developing various types of stochastic decision-making tools, ranging from 
scenario-based stochastic programming \cite{juanmi} to robust optimization \cite{bertsimas2} and chance-constrained programming \cite{Lubin}. However, these techniques disregard the inherent misrepresentation of the input probabilistic information, which may lead to high discrepancies between the expected results and the actual (ex-post) realizations \cite{Christos}. To address this issue, we focus on Distributionally Robust Optimization (DRO) \cite{Kuhn2}, which has shown the capability to outperform all previous techniques in presence of high uncertainties.

DRO allows to hedge against the inherent errors arising from modeling the probabilistic distribution that is typically derived from the probabilistic forecast. These errors may propagate to the resulting decisions and compromise the reliability and cost-efficiency of the power system. The exposure to uncertainty in the probability distribution itself is called \textit{ambiguity} \cite{Keynes1921}. By design, DRO considers a family of potential distributions, the so-called ambiguity set, to hedge against any inexactness or biasedness of the distribution function. Owing to its appealing properties, one popular approach to define an ambiguity set is based on the Wasserstein probability metric, which calculates the distance between two distribution functions \cite{Kuhn}. In the Wasserstein DRO approach, all distributions in the neighborhood of a central empirical distribution, based on historically observed samples of uncertainty, are collected and fed to the decision-making problem. Although this framework accounts for potential forecast errors, there is no guarantee that the distributions within the ambiguity set remain realistic (e.g., from a correlation perspective). As our main contribution, we go beyond the state of the art by developing a DRO approach for power systems that is aware of all potential \textit{dependencies} among uncertain parameters, especially the spatial dependencies of weather-dependent renewable energy sources. 



In the current literature, there is a willingness to enhance the Wasserstein ambiguity set by removing the unrealistic distributions of renewable power generation uncertainty. To do so, supplementary constraints should be incorporated into the design of the ambiguity set. References \cite{Kuhn} and \cite{Arrigo2020Wasserstein} include the support information in the definition of Wasserstein ambiguity set, excluding distributions with unrealistic realizations of uncertainty. An example of such unrealistic distributions is those with a negative renewable power generation. References \cite{li2018value} and \cite{Esteban2019Data} include modality information, i.e., the number of spikes in a probability distribution, within the ambiguity set to get rid of potentially unrealistic distributions, e.g., those with two or more spikes. References \cite{Wang2018Risk} and \cite{powertech} embed information on dependencies, which is based on imposing the value of a covariance matrix in the ambiguity set definition. \color{black} However, the covariance matrix is only able to capture \textit{linear} relations and fails to capture more complex non-linear dependencies (e.g., stemming from wind power generation). Moreover, the models derived in these works usually impose the value of covariance via a positive semi-definite matrix restriction which is not flexible, i.e., the distributions within the set must follow the dependence structure contained in an arbitrarily fixed semi-definite cone centered on the empirically observed covariance matrix. \color{black} Overall, all these works allow to improve the representation of multi-dimensional uncertainties, however all have their own limitations and cannot represent the whole dependence structure.

%
%
%
Several additional works have contributed towards embedding the full non-linear dependence information into the distributionally robust optimization. However, it is worth emphasizing that there are currently very few contributions in this research strand, especially focusing on power system applications. References \cite{Gao2017DataDrivenRO}-\cite{Pflug2018Review} envisage the use of \textit{copula} as the mathematical object to describe the full dependence structure among random variables. A copula is a distribution function with uniformly distributed marginals that contains solely information about the dependence structure among uncertain parameters \cite{Nelsen1999Introduction}-\cite{Sklar1959}. To incorporate the dependence structure into the optimization problem, \cite{Gao2017DataDrivenRO}-\cite{Pflug2018Review} introduce a constraint binding the Wasserstein distance between the copula of the distributions inside the ambiguity set and the one of the empirical distribution. In all these works, the marginals are fixed and assumed to be known. In other words, the only source of uncertainty stems from the dependence structure, and the ambiguity around the empirical distribution is no longer accounted for. This uncertainty scheme is usually suited for portfolio management problems in finance applications, but is expected to be insufficient for power system applications, where the marginal distributions of each renewable energy source are usually not predicted with high accuracy. 


To address this issue, \cite{Gao2017Distributionally} represents the first effort in the operations research literature towards embedding the full dependence information along with the ambiguity around the empirical distribution function. To do so, the authors combine the findings in \cite{Gao2017DataDrivenRO}-\cite{Pflug2018Review} with the classical approach to DRO \cite{Kuhn}. They develop a Wasserstein ambiguity set that uses copula as the mathematical object to describe the full dependence structure among renewable energy sources. 

In this paper, inspired by \cite{Gao2017Distributionally}, we propose a copula-based ambiguity set as the framework that \textit{generalizes} \textit{(i)} the constraint on the second-order moment \cite{Wang2018Risk}-\cite{powertech}, and, \textit{(ii)} the model with known marginals \cite{Gao2017DataDrivenRO}-\cite{Pflug2018Review} by enabling the DRO problem to consider \textit{any} potential type of correlation (not necessarily linear) among uncertain parameters. By this generalization, we are able to properly model the renewable power generation uncertainty that may exhibit any shape of dependencies \cite{Vallee2007Impact}. To the best of our knowledge, this is the first effort in the literature to introduce copula-based ambiguity sets for power system applications. In particular, our contributions are threefold: \color{black}

\begin{itemize}
\item[\textit{(i)}] We first develop an optimization program in Lemma \ref{Lemma} which computes the value of an empirical cumulative distribution function. This allows us to link any distribution within the ambiguity set to its corresponding copula.
\item[\textit{(ii)}] Building up on Lemma 1, we develop a conic reformulation of a distributionally robust worst-case expectation problem\footnote{The worst-case expectation problem is a well-known problem in the field of DRO \cite{Guo2019Data}. Its generic reformulation usually facilitates the application of DRO to any kind of problem where it naturally appears.} in Theorem 1, using the proposed copula-based ambiguity set. Theorem 1 and the solution approach in Appendix C are kept generic such that they could be applied to a broad range of problems in power systems. \color{black}
\item[\textit{(iii)}] We apply the proposed conic formulation to a day-ahead distributionally robust Optimal Power Flow (OPF) problem with dependent renewable power generation uncertainty. We explore both DC and AC optimal power flow formulations and numerically show the benefits of the proposed reformulation in both cases compared to a state-of-the-art distributionally robust model, while identifying potential paths for future research. \color{black}
\end{itemize}

The remainder of this paper is structured as follows. Sections \ref{Sect:TradMetric} and \ref{Sect:Uncertainty} provide a definition for the traditional metric-based and the proposed copula-based ambiguity sets, respectively. Section \ref{Sect:Reformulation} explains the reformulation process of a generic worst-case expectation problem under the copula-based ambiguity set. Section \ref{Sect:ModelFormulation} introduces the day-ahead distributionally robust OPF problem and details the required reformulations for both DC and AC formulations. Section \ref{Sect:NumericalStudy} discusses the proposed model through an extensive numerical analysis. Main conclusion and prospects are given in Section \ref{Sect:Conclusion}. Finally, the mathematical proofs of Lemma 1 and Theorem 1 are provided in Appendixes A and B, respectively, while the proposed solution approach for Theorem 1 is discussed in Appendix C.



\color{black}
\section{The Traditional Metric-Based Ambiguity Set}
\label{Sect:TradMetric}

We start with the definition of an empirical distribution function. Consider $N$ number of equiprobable historical observations of the uncertainty, i.e., $\widehat{\xi}_i$, where $i=\{1, 2, ..., N\}$. The empirical distribution  $\widehat{\mathbb{Q}}_N$ is defined as
\begin{equation}
\widehat{\mathbb{Q}}_N= \frac{1}{N} \sum\limits_{i = 1}^N \delta_{\widehat{\xi}_i},
\end{equation}
where $\delta_{\widehat{\xi}_i}$ represents the Dirac distribution centered on $\widehat{\xi}_i$ and is assigned with a probability equal to $\frac{1}{N}$. Note that every symbol with a hat, e.g., $\widehat{\mathbb{Q}}_N$ and $\widehat{\xi}_i$, corresponds to historical observations. Next, we define the Wasserstein probability metric. The metric $ d_W \left( \mathbb{Q} , \widehat{\mathbb{Q}}_N \right) : \mathcal{A} \times \mathcal{A} \rightarrow \mathbb{R}^+$ computes the distributional distance between two distribution functions $\mathbb{Q}$ and $\widehat{\mathbb{Q}}_N$ as the optimal value of a transportation problem between the probability mass of those two distributions \cite{WassersteinDef}. The complete definition of this metric is given in the online appendix \cite{OnlineAppendix}. Note that $\mathcal{A}$ defines the space of all distribution functions.

Given the definition of the Wasserstein probability metric, the metric-based  ambiguity set $\mathcal{M}_1$ is defined as
\begin{equation}\label{M1}
\mathcal{M}_1 = \left\lbrace \mathbb{Q} \in \mathcal{A} \, \middle\vert \, d_W \left( \mathbb{Q} , \widehat{\mathbb{Q}}_N \right) \leq \theta_1 \right\rbrace,
\end{equation}

\noindent which contains a family of distributions $\mathbb{Q}$ in the space of all distribution functions $\mathcal{A}$ that are in the neighbourhood of the empirical distribution function $\widehat{\mathbb{Q}}_N$. 
The distributional distance $d_W \left( \mathbb{Q} , \widehat{\mathbb{Q}}_N \right)$ is limited to be lower than  or equal to $\theta_1 \in \mathbb{R}^+$. This value is tuned by the decision-maker, and is referred to as the radius of the ambiguity set. One can intuitively interpret that a larger value of $\theta_1$ yields an ambiguity set that contains more distributions, implying that the decision-maker is less certain about the true distribution of the uncertainty. The reader is referred to \cite{Kuhn} for more details about the traditional Wasserstein ambiguty set. In Section \ref{Sect:Uncertainty}, we innovately enhance the traditional Wasserstein metric-based ambiguity set with a copula model to prevent unrealistic uncertainty distributions within the ambiguity set.\color{black}

\vspace{-0.3cm}
\section{The Proposed Copula-Based Ambiguity Set}
\label{Sect:Uncertainty}

We first mathematically define copula $\mathbb{C}$ of the distribution $\mathbb{Q}$. Let us consider $\vert \mathcal{W} \vert$ number of renewable power units, e.g., wind farms. Let $\widetilde{\xi} \in \mathbb{R}^{\vert \mathcal{W} \vert}$ be linked to distribution $\mathbb{Q}$. Unlike symbols with a hat that refer to historical observations, those with a tilde, e.g., $\widetilde{\xi}$, correspond to  uncertain parameters.  The symbol $\widetilde{\xi}_k$ refers to the uncertain  generation of the renewable power unit $k$. Mathematically speaking, the copula $\mathbb{C}$ of distribution $\mathbb{Q}$ is defined as the cumulative distribution function of the uncertain parameter $\widetilde{U}$ \cite{Nelsen1999Introduction}-\cite{Sklar1959}, i.e., 
\begin{align}
& \left( \widetilde{U}_1, ..., \widetilde{U}_k, ..., \widetilde{U}_{\vert \mathcal{W} \vert} \right) \nonumber \\ 
& \hphantom{\text{-----------}} = \Bigg(F_1 \left( \widetilde{\xi}_1 \right), ..., F_k \left( \widetilde{\xi}_k \right), ..., F_{\vert \mathcal{W} \vert} \left( \widetilde{\xi}_{\vert \mathcal{W} \vert} \right) \Bigg),  \label{CopulaEq}
\end{align}


\noindent \color{black} where $\widetilde{U} \in \mathbb{R}^{\vert \mathcal{W} \vert}$ is linked to distribution $\mathbb{C}$. In addition, the function $F_k \left( . \right) = \mathbb{Q}_k \left( \tilde{\xi}_k \leq . \right)$ represents the cumulative distribution function of element $\widetilde{\xi}_k$ of the $\vert \mathcal{W} \vert$-dimensional uncertain vector $\widetilde{\xi}$. Note that the probability operator $\mathbb{Q}_k \left( . \right)$ represents the marginal distribution function of the random variable $\tilde{\xi}_k$. Therefore, $F_k \left( \tilde{\xi}_k \right)$ defines a random variable which is uniformly distributed on the interval $\left[ 0, 1 \right]$. \color{black}

By \eqref{CopulaEq}, the resulting distribution of $\widetilde{U}$ has marginals that are uniformly distributed on the interval $\left[ 0, 1 \right]$ and has the property to embody the dependence between the components of $\widetilde{\xi}$. \color{black} In other words, the information contained in any multivariate distribution function can be splitted into \textit{(i)} a collection of marginal distributions (containing the univariate information) and, \textit{(ii)} a copula (containing the dependence information) from which the univariate information has been filtered.


\color{black} We now elaborate on the need for a copula-based ambiguity set. \color{black} Whereas the metric-based ambiguity set $\mathcal{M}_1$ defined in \eqref{M1} allows the decision-maker to incorporate useful information as much as possible into the distributionally robust program, it may still contain erroneous distributions, i.e., distributions that do not reflect with fidelity the potential outcome of the uncertainty. \color{black} For instance, distributions within set $\mathcal{M}_1$ may exhibit a completely different dependence structure than the one observed empirically. Hence, the decisions may unnecessarily be optimized for an over-conservative and/or non-representative insight of uncertainty, resulting in a higher expected total operational cost. In order to avoid such a situation, we aim to eliminate those distributions from the ambiguity set. In that direction, this paper introduces the copula-based ambiguity set $\mathcal{M}_2$ which generally contains more representative distribution functions, such that \color{black} 
%
%
\begin{align}
& \mathcal{M}_2 = \left\lbrace \mathbb{Q} \in \mathcal{A} \,\middle\vert\, \begin{aligned} & d_W \left( \mathbb{Q} , \widehat{\mathbb{Q}}_N \right) \leq \theta_1 \\
& d_W \left( \mathbb{C} , \widehat{\mathbb{C}}_N \right) \leq \theta_2 \end{aligned} \right\rbrace.
\end{align}

The first constraint in the ambiguity set $\mathcal{M}_2$ is identical to the one in $\mathcal{M}_1$, \color{black} yielding the desirable properties of the classical definition of the ambiguity set. \color{black} The newly added second constraint limits the distributional distance $d_W \left( \mathbb{C} , \widehat{\mathbb{C}}_N \right)$ between the endogenously selected copula $\mathbb{C}$ of the endogenously selected distribution $\mathbb{Q}$ and the empirical copula $\widehat{\mathbb{C}}_N$ of the distribution $\widehat{\mathbb{Q}}_N$. This distance should not be greater than $\theta_2 \in \mathbb{R}^+$. Again, $\theta_2$ is a parameter to be tuned by the decision-maker. By restricting the distributions $\mathbb{Q}$ inside the ambiguity set to have a copula $\mathbb{C}$ in the neighbourhood of the empirical one, the distributions inside the ambiguity set will follow a dependence structure which remains close to the historically observed one\footnote{Another potential methodology relies on fixing the value of the second-order moment of the distributions within the ambiguity set \cite{Wang2018Risk}-\cite{powertech}. However, only the \textit{linear} dependence structure will be captured by such a constraint. On the contrary, the copula-based approach allows to capture \textit{any} kind of dependence structure (not necessarily linear) and therefore offers a more general framework.}. Similar to $\theta_1$, a greater value of $\theta_2$ implies that the decision-maker is less confident about the true dependence structure of the uncertainty, and includes distributions whose dependencies are less similar to those of the empirical one within the ambiguity set.

%
%

\vspace{-0.3cm}
\section{Worst-case Expected problem Under the Copula-Based Ambiguity Set}
\label{Sect:Reformulation}


%
%
By design, DRO aims to determine the \textit{worst-case} distribution within the given ambiguity set, and makes decisions in \textit{expectation} with respect to such a worst-case distribution. This section derives reformulation for a generic distributionally robust worst-case expectation problem, using the copula-based ambiguity set $\mathcal{M}_2$. \color{black} This problem writes as 
\begin{equation}\label{DRWCE}
\min_{x \in \mathcal{X}} \enspace \overbrace{\underset{\mathbb{Q} \in \mathcal{M}_2}{\max} \enspace \mathbb{E}^\mathbb{Q} \left[ a \left( x \right)^\top \widetilde{\xi} + b \left( x \right) \right]}^{\text{Worst-case expected cost}},
\end{equation}
\noindent where $x \in \mathcal{X}$ is the vector of decision variables. The inner maximization operator in \eqref{DRWCE} picks the worst-case distribution $\mathbb{Q}$ in the ambiguity set $\mathcal{M}_2$. The probability distribution of the uncertain parameter $\widetilde{\xi} \in \mathbb{R}^{\vert \mathcal{W} \vert}$ is $\mathbb{Q}$. The objective function \eqref{DRWCE} is linear\footnote{\color{black} For the sake of simplicity, we assume linearity of the objective function. This assumption is aligned with the current practice of electricity markets with linear bids. An extension to a non-linear objective function is straightforward, but requires additional reformulations that are out of the scope of this paper. \color{black}}, comprising of the decision-dependent vector $a(x) \in \mathbb{R}^{\vert  \mathcal{W} \vert}$ and the decision-dependent scalar $b(x) \in \mathbb{R}$. 

One key step in deriving the reformulation of \eqref{DRWCE} is to establish an analytical link between the copula $\mathbb{C}$ and its distribution function $\mathbb{Q}$. This can be achieved by using \eqref{CopulaEq}, in which $\mathbb{Q}$ and $\mathbb{C}$ are linked through the marginal cumulative distribution functions $F_k(.)$.


\begin{remark}
Theoretically, the link between variable copula $\mathbb{C}$ and variable distribution $\mathbb{Q}$ is established via the marginal cumulative distribution functions $F_k^\mathbb{Q} \, \forall k$ of $\mathbb{Q}$. However, in practice, these functions are not straightforwardly accessible, due to their complex variable nature which impedes their endogenous reformulation. Consequently, considering these variable functions within the optimization framework would require research efforts that are beyond the scope of this paper. Therefore, we use the \textit{empirical} marginal cumulative distribution functions $F_k^{\hat{\mathbb{Q}}_N} \, \forall k$ of $\hat{\mathbb{Q}}_N$. By doing so, we are able to derive a tractable reformulation of \eqref{DRWCE}. Our hypothesis is that the approximation made by assessing the endogenous copula $\mathbb{C}$ via the functions $F_k^{\hat{\mathbb{Q}}_N}$ is required (to avoid non-linear formulation of functions $F_k^\mathbb{Q}$) and is valid in practice when $F_k^\mathbb{Q}$ and $F_k^{\hat{\mathbb{Q}}_N}$ are close to each other (e.g., when $\theta_1$ is small enough). This setting is typically suited for a day-ahead probabilistic forecast embedded in OPF problems, as further demonstrated in our numerical analysis in Section \ref{Sect:NumericalStudy}. In the following, functions $F_k^{\hat{\mathbb{Q}}_N}$ are denoted $F_k$ for the ease of notation.
\end{remark}


From now on, we denote by $\eta \in \mathbb{R}$, the value of the argument of the function, i.e., $F_k (\eta)$. Let us further clarify this function by a schematic illustration. Fig. \ref{EmpirFofxi} shows the shape of the function $F_k(\eta)$ for the renewable power unit $k$, given the arbitrarily selected eight equiprobable historical observations $\widehat{\xi}_{ki}, \enspace  i \in \left\lbrace 1, ..., N=8 \right\rbrace$ of $\widetilde{\xi}_k$. In particular, this figure shows how the historical observations $\widehat{\xi}_{ki}$, the variable $\eta$ in the $x$-axis and the function $F_k (\eta)$ in the $y$-axis are linked. Accordingly, the empirical marginal cumulative distribution function for the renewable power unit $k$ writes as
%
%
\begin{figure}
\centering
\includegraphics[scale=0.3]{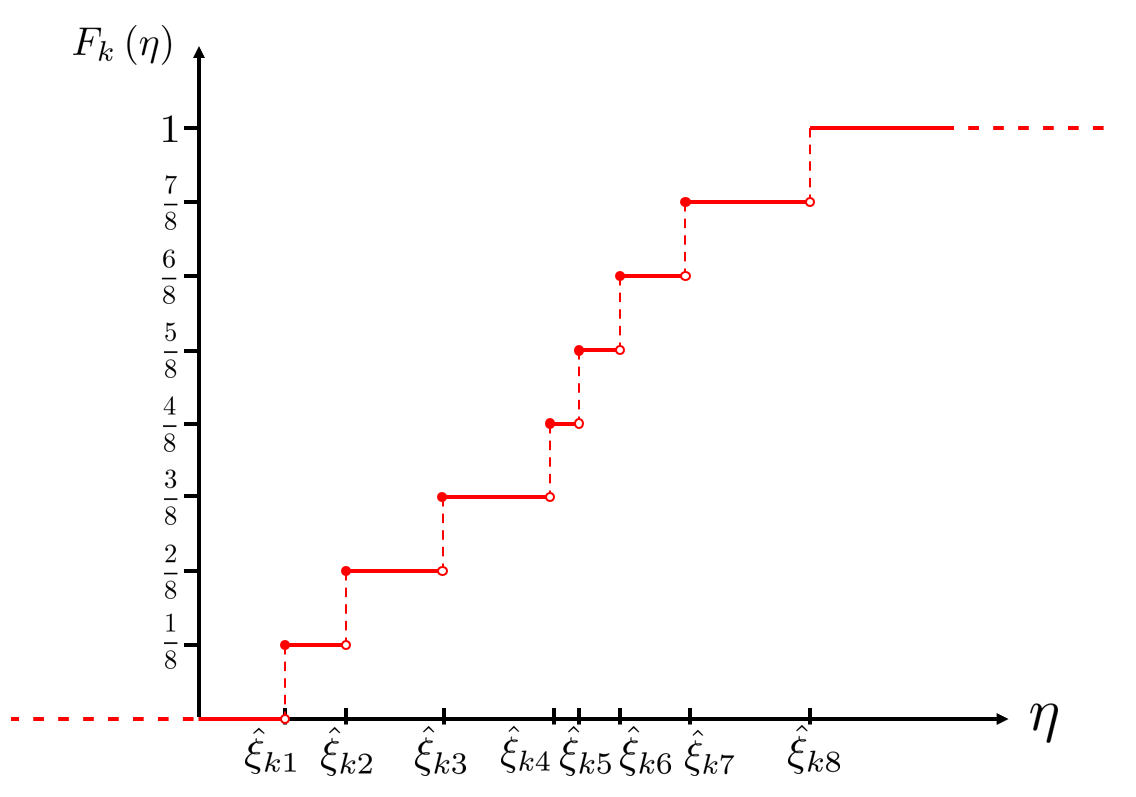}
\caption{\color{black} Illustration of an empirical marginal cumulative distribution function for the renewable power unit $k$  when the number of historical observations is $N$ = 8 (an arbitrarily selected number). \color{black}}
\label{EmpirFofxi}
\end{figure}
\begin{equation}\label{F_of_xi_Lemma}
F_k \left( \eta \right) = \frac{1}{N} \sum\limits_{i = 1}^N \mathbbm{1}_{\eta \geq \widehat{\xi}_{ki}} \, \text{with} \, \mathbbm{1}_{\eta \geq \widehat{\xi}_{ki}} = \begin{cases} 1 \text{ if } \eta \geq \widehat{\xi}_{ki} \\ 0 \text{ otherwise.} \end{cases} 
\end{equation}

\begin{lemma}\label{Lemma}
The empirical marginal cumulative distribution function $F_k \left( \eta \right)$ for the renewable power unit $k$  is equivalent to the following linear optimization program:
\begin{subequations}\label{F_of_xi_LemmaOpti}
\begin{align}
& \max_{z_{ki}} \enspace \frac{1}{N} \sum\limits_{i = 1}^N z_{ki} \label{LemmaOF}\\
& \text{s.t.} \enspace z_{ki} \left( \eta - \widehat{\xi}_{ki} \right) \geq 0 \enspace \forall i \in \left\lbrace 1, ..., N \right\rbrace, \label{Conspos}\\
& \hphantom{\text{s.t.} \enspace} 0 \leq z_{ki} \leq 1 \hphantom{\text{---------}} \forall i \in \left\lbrace 1, ..., N \right\rbrace, \label{Consbound}
\end{align}
\end{subequations}

\noindent where $\eta \in \mathbb{R}$ is the real number that corresponds to the argument of the function and $z_{ki} \in \mathbb{R} \, \forall k \, \forall i$ is the decision variable. 
\end{lemma}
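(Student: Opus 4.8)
The plan is to show that the optimal value of the linear program \eqref{F_of_xi_LemmaOpti} coincides, term by term, with the sum $\frac{1}{N}\sum_{i=1}^N \mathbbm{1}_{\eta \geq \widehat{\xi}_{ki}}$ given in \eqref{F_of_xi_Lemma}. Since the objective \eqref{LemmaOF} is separable across the index $i$ and the constraints \eqref{Conspos}--\eqref{Consbound} couple nothing across distinct $i$, it suffices to maximize $z_{ki}$ individually for each fixed $i$ subject to $z_{ki}(\eta - \widehat{\xi}_{ki}) \geq 0$ and $0 \leq z_{ki} \leq 1$, and then verify that this per-term maximum equals $\mathbbm{1}_{\eta \geq \widehat{\xi}_{ki}}$.

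First I would treat the case $\eta \geq \widehat{\xi}_{ki}$. Then $\eta - \widehat{\xi}_{ki} \geq 0$, so constraint \eqref{Conspos} is satisfied for every $z_{ki} \in [0,1]$ (the product of two nonnegative numbers is nonnegative); hence it is non-binding, and maximizing $z_{ki}$ over $[0,1]$ yields $z_{ki}^\star = 1 = \mathbbm{1}_{\eta \geq \widehat{\xi}_{ki}}$. Next I would treat the complementary case $\eta < \widehat{\xi}_{ki}$, i.e.\ $\eta - \widehat{\xi}_{ki} < 0$. Then \eqref{Conspos} forces $z_{ki} \leq 0$, which together with \eqref{Consbound} pins $z_{ki}$ to exactly $0$; the feasible set for this term is the singleton $\{0\}$, so $z_{ki}^\star = 0 = \mathbbm{1}_{\eta \geq \widehat{\xi}_{ki}}$. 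In both cases the optimal per-term value matches the indicator, so summing and dividing by $N$ recovers \eqref{F_of_xi_Lemma} exactly, which establishes the equivalence.

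The only delicate point worth flagging is that constraint \eqref{Conspos} is \emph{bilinear} in $z_{ki}$ and $\eta$; however, since $\eta$ is a fixed real parameter (the argument of the function) and not an optimization variable in this lemma, the product $z_{ki}(\eta - \widehat{\xi}_{ki})$ is linear in the decision variable $z_{ki}$, so the program is genuinely a linear program as claimed. The boundary case $\eta = \widehat{\xi}_{ki}$ needs a quick check: there $\eta - \widehat{\xi}_{ki} = 0$, constraint \eqref{Conspos} reads $0 \geq 0$ and is vacuous, so $z_{ki}^\star = 1$, consistent with the convention $\mathbbm{1}_{\eta \geq \widehat{\xi}_{ki}} = 1$ adopted in \eqref{F_of_xi_Lemma}. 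I expect no real obstacle; the main care is simply in handling the sign of $\eta - \widehat{\xi}_{ki}$ cleanly and confirming that the separability argument is legitimate.
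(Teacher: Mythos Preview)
Your proposal is correct and follows essentially the same approach as the paper: both argue, via a sign analysis on $\eta - \widehat{\xi}_{ki}$, that the optimal $z_{ki}^\star$ reproduces the indicator $\mathbbm{1}_{\eta \geq \widehat{\xi}_{ki}}$, whence the optimal objective value equals the empirical CDF. Your write-up is in fact more careful than the paper's own proof, explicitly invoking separability across $i$, handling the boundary case $\eta = \widehat{\xi}_{ki}$, and clarifying why the program is linear despite the product in \eqref{Conspos}.
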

\begin{proof}
See Appendix \ref{ProofLemmaAppendix}.
\end{proof}
%
Based on this analytical link between $\mathbb{C}$ and $\mathbb{Q}$, it is now possible to reformulate the worst-case expectation problem \eqref{DRWCE} with the ambiguity set $\mathcal{M}_2$. 

\setcounter{theorem}{0}
\color{black}
\begin{theorem}\label{TheoremCopula}

Given $N$ historical observations $\widehat{\xi}_i, \ i \in \left\lbrace 1, ..., N \right\rbrace$ of the random variable $\widetilde{\xi}$, the worst-case expectation problem $\underset{\mathbb{Q} \in \mathcal{M}_2}{\max} \enspace \mathbb{E}^\mathbb{Q} \left[ a \left( x \right)^\top \widetilde{\xi} + b \left( x \right) \right]$ is equivalent to the following conic reformulation:
\begingroup
\allowdisplaybreaks
\begin{subequations}\label{FinalTheorem}
\begin{align} 
& \min_{\Pi} \enspace \alpha \theta_1 + \beta \theta_2 + \frac{1}{N} \sum\limits_{i = 1}^N y_i \label{TheoremOF} \\
& \hphantom{\text{---}} \text{s.t.} \enspace y_i \geq  \max_{\xi \in \Xi} \enspace a \left( x \right)^\top \xi + b \left( x \right) - {\zeta_{i}^{(1)}}^\top \left( \widehat{\xi}_i - \xi \right) \notag \\
& \hphantom{\text{------------------}} - {\zeta_{i}^{(2)}}^\top \left( F \left( \widehat{\xi}_i \right) - F \left( \xi \right)  \right) \enspace \forall i  \label{eq1} \\
& \hphantom{\text{---} \text{s.t.} \enspace} \norm{ \zeta_{i}^{(1)} }_\ast \leq \alpha \enspace \forall i  \label{alphaconstraint} \\
& \hphantom{\text{---} \text{s.t.} \enspace} \norm{ \zeta_{i}^{(2)} }_\ast \leq \beta \enspace \forall i, \label{betaconstraint}
\end{align}
\end{subequations}
\endgroup


\noindent where $i \in \left\lbrace 1, ..., N\right\rbrace$. The decision variables are collected in $\Pi = \left\lbrace x \right.$, $\alpha$, $\beta$, $y_i$, $\zeta_{i}^{(1)}$, $\zeta_{i}^{(2)} \left. \right\rbrace$. In particular, $\alpha$, $\beta \in \mathbb{R}^+$, $y_i \in \mathbb{R} \enspace \forall i \in \left\lbrace 1, ..., N \right\rbrace$, and $\zeta_{i}^{(1)}$, $\zeta_{i}^{(2)} \in \mathbb{R}^{\vert \mathcal{W} \vert}$ are auxiliary variables. Recall that parameters $\theta_1$ and $\theta_2$  in \eqref{TheoremOF} are the Wasserstein radii defined for the ambiguity set $\mathcal{M}_2$. In addition, $F(\widehat{\xi}_i) = \left( F_1(\widehat{\xi}_{1i}), ..., F_k(\widehat{\xi}_{ki}), ..., F_{\vert \mathcal{W} \vert}(\widehat{\xi}_{\vert \mathcal{W} \vert i}) \right)^\top$ and the operator $|| . ||_\ast$ computes the dual norm of a vector. The vector $F \left( \xi \right) = \left(  F_1 \left( \xi_1 \right), ..., F_{\vert \mathcal{W} \vert} \left( \xi_{\vert \mathcal{W} \vert} \right) \right)^\top$ is reformulated using Lemma 1, and is given by

\small
\begin{equation}
    F \left( \xi \right) = \left( \begin{aligned}
       & \left\lbrace \begin{aligned}
            & \underset{z_{jki}}{\max} \enspace \frac{1}{N} \sum\limits_{j=1}^N z_{jki} \\
            & \text{s.t.} \enspace z_{jki} \left( \xi_k - \widehat{\xi}_{kj} \right) \geq 0 \enspace \forall j \\
            & \hphantom{\text{s.t.}} \enspace 0 \leq z_{jki} \leq 1 \enspace \forall j.
        \end{aligned} \right\rbrace , k = 1 \\
        & \hphantom{\text{-----------------------}} \vdots\\
        & \left\lbrace \begin{aligned}
            & \underset{z_{jki}}{\max} \enspace \frac{1}{N} \sum\limits_{j=1}^N z_{jki} \\
            & \text{s.t.} \enspace z_{jki} \left( \xi_k - \widehat{\xi}_{kj} \right) \geq 0 \enspace \forall j \\
            & \hphantom{\text{s.t.}} \enspace 0 \leq z_{jki} \leq 1 \enspace \forall j.
        \end{aligned} \right\rbrace , k = \vert \mathcal{W} \vert \\
    \end{aligned} \right). \label{OptiVector}
\end{equation}  
\end{theorem}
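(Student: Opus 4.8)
The plan is to follow the standard Wasserstein-DRO duality machinery (as in the classical reference \cite{Kuhn}), but carry through the extra copula constraint $d_W(\mathbb{C}, \widehat{\mathbb{C}}_N) \leq \theta_2$ in parallel with the usual distance constraint on $\mathbb{Q}$. First I would write the inner worst-case expectation $\max_{\mathbb{Q}\in\mathcal{M}_2} \mathbb{E}^{\mathbb{Q}}[a(x)^\top\widetilde{\xi} + b(x)]$ as an infinite-dimensional linear program over joint transportation plans: since the Wasserstein distances are themselves optimal transport costs between $\mathbb{Q}$ and $\widehat{\mathbb{Q}}_N$ (resp. $\mathbb{C}$ and $\widehat{\mathbb{C}}_N$), and the empirical measures are supported on the $N$ atoms $\widehat{\xi}_i$, the joint plan decomposes into $N$ conditional measures $\mathbb{Q}_i$ over $\Xi$ weighted by $1/N$. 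The key observation — enabled by the Remark and by \eqref{CopulaEq} with the empirical marginals $F_k$ — is that transporting mass from $\widehat{\xi}_i$ to $\xi$ in $\widetilde{\xi}$-space simultaneously transports mass from $F(\widehat{\xi}_i)$ to $F(\xi)$ in copula-space, so the two transport costs can be bounded by $\mathbb{E}[\|\widehat{\xi}_i - \xi\|]$ and $\mathbb{E}[\|F(\widehat{\xi}_i) - F(\xi)\|]$ respectively against a single pair of plans. This lets me write the inner problem as
\begin{equation*}
\sup_{\mathbb{Q}_i} \ \frac{1}{N}\sum_{i=1}^N \int_\Xi \bigl(a(x)^\top\xi + b(x)\bigr)\,\mathbb{Q}_i(d\xi) \quad \text{s.t.} \quad \frac{1}{N}\sum_i\int_\Xi \|\widehat{\xi}_i-\xi\|\,\mathbb{Q}_i(d\xi)\leq\theta_1, \ \frac{1}{N}\sum_i\int_\Xi \|F(\widehat{\xi}_i)-F(\xi)\|\,\mathbb{Q}_i(d\xi)\leq\theta_2.
\end{equation*}

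Next I would dualize the two scalar budget constraints, introducing multipliers $\alpha,\beta\geq0$. By weak (and, under a Slater-type condition guaranteeing strong) duality for this moment problem, the inner sup becomes
\begin{equation*}
\inf_{\alpha,\beta\geq0} \ \alpha\theta_1 + \beta\theta_2 + \frac{1}{N}\sum_{i=1}^N \sup_{\xi\in\Xi}\Bigl(a(x)^\top\xi + b(x) - \alpha\|\widehat{\xi}_i-\xi\| - \beta\|F(\widehat{\xi}_i)-F(\xi)\|\Bigr).
\end{equation*}
Then I would linearize the two norm penalties by introducing dual-norm variables: $-\alpha\|\widehat{\xi}_i-\xi\| = \min_{\|\zeta_i^{(1)}\|_\ast\leq\alpha}\zeta_i^{(1)\top}(\xi-\widehat{\xi}_i)$ and similarly $-\beta\|F(\widehat{\xi}_i)-F(\xi)\| = \min_{\|\zeta_i^{(2)}\|_\ast\leq\beta}\zeta_i^{(2)\top}(F(\xi)-F(\widehat{\xi}_i))$, which is just the definition of the dual norm. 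Introducing epigraph variables $y_i$ for the $i$-th inner supremum and pushing the $\min$ over $\zeta$'s outward gives exactly the constraints \eqref{eq1}--\eqref{betaconstraint}. Finally I would substitute the representation of each coordinate $F_k(\xi_k)$ by the linear program of Lemma~\ref{Lemma}, yielding \eqref{OptiVector} and completing the reformulation; re-attaching the outer $\min_{x\in\mathcal{X}}$ gives \eqref{FinalTheorem}.

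The main obstacle I expect is justifying the joint-transport step rigorously: the two Wasserstein constraints a priori live on different spaces ($\widetilde{\xi}$-space and $\widetilde{U}$-space) and involve two separate transport plans, so I must argue that it is without loss of optimality to couple them through the \emph{same} plan via the deterministic map $\xi\mapsto F(\xi)$ — this is precisely where the Remark's approximation (using the fixed empirical marginals $F_k$ rather than the variable $F_k^{\mathbb{Q}}$) is essential, since it makes $\xi\mapsto F(\xi)$ a fixed measurable push-forward so that any plan $\pi$ on $\widehat{\xi}_i$-to-$\xi$ pairs induces a plan on $F(\widehat{\xi}_i)$-to-$F(\xi)$ pairs whose cost is $\mathbb{E}_\pi\|F(\widehat{\xi}_i)-F(\xi)\|$. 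A secondary technical point is verifying strong duality for the dualization of the two budget constraints — this needs a Slater point (e.g. $\mathbb{Q}=\widehat{\mathbb{Q}}_N$ makes both distances zero, which is strictly feasible whenever $\theta_1,\theta_2>0$) — and checking measurable-selection/interchange conditions so that the $\sup$ over $\mathbb{Q}_i$ can be moved inside the sum and turned into a pointwise $\sup_\xi$. These are standard in the Wasserstein-DRO literature but must be stated carefully; the rest is the routine dual-norm bookkeeping sketched above. The details are deferred to Appendix~\ref{ProofTheoremAppendix}.
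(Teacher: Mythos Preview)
Your proposal is correct and follows essentially the same route as the paper's proof: dualize the two Wasserstein budget constraints with multipliers $\alpha,\beta\geq 0$ to obtain the penalized form $y_i\geq\max_{\xi\in\Xi}\, a(x)^\top\xi+b(x)-\alpha\|\widehat{\xi}_i-\xi\|-\beta\|F(\widehat{\xi}_i)-F(\xi)\|$, then linearize both norms via the dual-norm identity and absorb $\alpha,\beta$ into $\zeta_i^{(1)},\zeta_i^{(2)}$, and finally invoke Lemma~\ref{Lemma} for $F(\xi)$. The only noteworthy difference is that the paper does not derive the initial dual form itself but imports it from \cite[Theorem~2]{Gao2017Distributionally} (with details in the online companion), whereas you sketch the primal transport-plan LP and the Slater argument explicitly; your discussion of the joint-transport coupling via the fixed push-forward $\xi\mapsto F(\xi)$ is exactly the content of the paper's Remark~1 and is the point it glosses over most.
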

\normalsize

\begin{proof}
See Appendix \ref{ProofTheoremAppendix}.
\end{proof}


\color{black}
The outcome of Theorem 1 is an optimization problem with constraints involving optimization operators, which is not straightforward to be solved by using off-the-shelf solvers. In Appendix \ref{SolutionApproachAppendix}, we aim to reformulate problem \eqref{FinalTheorem}, especially constraint \eqref{eq1}, such that it can be incorporated into an optimization problem. To do so, we leverage duality theory and McCormick relaxation of bilinear terms. This will eventually enable us to reformulate the inner worst-case expectation in \eqref{DRWCE}. \color{black}


\vspace{-0.3cm}
\section{A Power System Application}
\label{Sect:ModelFormulation}

Theorem \ref{TheoremCopula} and its tractable solution approach in Appendix \ref{SolutionApproachAppendix} provide a conic reformulation for a general decision-making problem under uncertainty, and therefore it can be applied to any decision-making optimization problem in power systems. \color{black} As an example, we apply our proposed model to the day-ahead distributionally robust OPF problem, while capturing the dependencies among renewable energy sources. \color{black}We derive the reformulation in two particular cases, i.e., \textit{i)} meshed transmission systems using a DC power flow approximation\footnote{\color{black}We highlight that the DC model of power flows \cite{Christie2000} is a linear and convex approximation to the original complete set of non-linear and non-convex equations governing the AC power flow model. The resulting DC power flow model relies on assumptions that are usually deemed comparatively less strong for high-voltage transmission lines, i.e., lossless power lines, constant voltage magnitudes and small voltage angle differences along a line. However, these assumptions do not guarantee that the solution obtained is necessarily feasible with respect to the AC power flow equations \cite{Baker}.}, and \textit{ii)} radial distribution systems using a \textit{LinDistFlow} approximation of AC power flow equations\footnote{\color{black}The solution of the DC-OPF problem may not be feasible with respect to the complete set of AC power flow equations \cite{Baker}. As a prospect to the contribution in this work, we highlight the extension of our work to the most complete set of AC power flow equations. In partial fulfilment to this prospect, we consider the \textit{LinDistFlow} approximation of AC power flow equations, which is valid for radial distribution systems, and show in the numerical results that the proposed copula-based approach for considering dependencies between wind farms outperforms the traditional techniques which disregard the correlations.}$^{\text{,}}$\footnote{\color{black} To the best of our knowledge, this is the first time in the literature that a metric-based distributionally robust OPF problem is derived using \textit{LinDistFlow} approximation of AC power flow equations (please see \cite{MoB2} for a moment-based counterpart). The reason for this is that the resulting distributionally robust models are usually highly complex to solve. The complexity arises from the additional modeling layer required by DRO, where the operating constraints such as transmission line capacity limits need to be formulated in an appropriate linear or convex form for introducing uncertain fluctuations. \color{black} Any additional contribution towards using more extended AC modeling of power flows, e.g., those investigated in \cite{GOComp}, within the proposed distributionally robust framework would require advanced convexification techniques dedicated to DRO, which is left for future work.\color{black}}.\color{black}



\vspace{-0.2cm}
\subsection{\color{black} The OPF Problem for Meshed Transmission Systems \color{black}}


Given a generic ambiguity set $\mathcal{M}$ (which is either  $\mathcal{M}_1$ or $\mathcal{M}_2$ introduced in Section II and III), \color{black} the distributionally robust day-ahead optimal power flow problem \color{black} for meshed transmission systems \color{black} reads as 
\begingroup
\allowdisplaybreaks
\begin{subequations}\label{DR}
\begin{align}
& \underset{g, \overline{r}, \underline{r}, V}{\text{min}} \enspace c^\top g + \overline{c}^\top \overline{r} + \underline{c}^\top \underline{r} + \underset{\mathbb{Q} \in \mathcal{M}}{\text{max}} \enspace \mathbb{E}^{\mathbb{Q}} \left[ c^\top V \widetilde{\xi}  \right]  \label{DROF} \\
& \text{s.t.} \enspace g + \overline{r} \leq g^{\text{max}}, \enspace g - \underline{r} \geq g^{\text{min}}, \label{DRPmin}\\
& \hphantom{\text{s.t.} \enspace} 0 \leq \underline{r} \leq r^{\text{max}}, \enspace 0 \leq \overline{r} \leq r^{\text{max}}, \label{DRRmax}\\
& \hphantom{\text{s.t.} \enspace} \mathds{1}^\top g + \mathds{1}^\top W \mu - \mathds{1}^\top d = 0, \label{DRDAbalance}\\
& \hphantom{\text{s.t.} \enspace} \sum\limits_{p \in \mathcal{P}} V_{p,w} + W_{w,w} = 0 \enspace \forall w \in \mathcal{W}, \label{DRRTbalance} \\
& \hphantom{\text{s.t.} \enspace} \underset{\mathbb{Q} \in \mathcal{M}}{\text{min}} \enspace  \mathbb{Q} \left( -\underline{r}_p \leq V_p \widetilde{\xi} \right) \geq 1 - \underline{\epsilon}_p \enspace \forall p \in \mathcal{P}, \label{DRRTdownflex}\\
& \hphantom{\text{s.t.} \enspace} \underset{\mathbb{Q} \in \mathcal{M}}{\text{min}} \enspace \mathbb{Q} \left( V_p \widetilde{\xi} \leq \overline{r}_p \right) \geq 1 - \overline{\epsilon}_p \enspace \hphantom{\mathbf{-}} \forall p \in  \mathcal{P}, \label{DRRTupflex} \\
& \hphantom{\text{s.t.} \enspace} \underset{\mathbb{Q} \in \mathcal{M}}{\text{min}} \enspace \mathbb{Q} \left( \text{T}_f^{\mathcal{P}} \left( g + V \widetilde{\xi} \right) + \text{T}_f^{\mathcal{W}} W \left( \mu + \widetilde{\xi} \right) \right. \notag \\ 
& \hphantom{\text{s.t.} \enspace  ---} \left.  \left. - \text{T}_f^{\mathcal{D}} d \right) \leq f^{\text{max}}_f \right) \geq 1 - \epsilon_f \enspace \enspace \hphantom{\mathbf{-}} \forall f \in  \mathcal{F}. \label{DRRTFmax}
\end{align}
\end{subequations}
\endgroup

\color{black}

The problem \eqref{DR} optimizes the day-ahead dispatch $g \in \mathbb{R}^{\vert \mathcal{P} \vert}$ of conventional generating units as well as their upward and downward reserve capacity $\overline{r} \in \mathbb{R}^{\vert \mathcal{P} \vert}$ and $\underline{r} \in \mathbb{R}^{\vert \mathcal{P} \vert}$ to be booked in the day-ahead stage.
Objective function \eqref{DROF} minimizes the total operational cost of the system. The first three terms in \eqref{DROF} are linear and contains the energy production cost $c \in \mathbb{R}^{\vert \mathcal{P} \vert}$ and the upward/downward reserve procurement cost $\overline{c} \in \mathbb{R}^{\vert \mathcal{P} \vert}$ and $\underline{c} \in \mathbb{R}^{\vert \mathcal{P} \vert}$. The fourth term in \eqref{DROF}, i.e., $\underset{\mathbb{Q} \in \mathcal{M}}{\text{max}} \enspace \mathbb{E}^{\mathbb{Q}} \left[ c^\top V \widetilde{\xi}  \right]$, refers to the operational cost of conventional units, incurred by the activation of their reserve capacity in the real-time operation. This cost is calculated in expectation with respect to the worst-case distribution $\mathbb{Q}$, which is endogenously selected within the ambiguity set $\mathcal{M}$. It is approximated by linear decision rules \cite{LDR} using matrix $V \in \mathbb{R}^{\vert \mathcal{P} \vert \times \vert \mathcal{W} \vert}$, whose elements are decision variables. \color{black} The uncertainty in the distributionally robust OPF problem arises from renewable wind power in-feed, when the real-time realization deviates from its day-ahead forecast. We model the renewable wind power in-feeds as
\begin{equation}
\omega = \mu + \tilde{\xi}, \label{wmuxi}
\end{equation}
\noindent where $ \omega \in \mathbb{R}^{\vert \mathcal{K} \vert}$ describes the actual real-time wind power generation, $ \mu \in \mathbb{R}^{\vert \mathcal{K} \vert} $ defines the day-ahead forecast for wind power generation and $ \tilde{\xi} \in \mathbb{R}^{\vert \mathcal{K} \vert}$ is the deviation in real-time from day-ahead forecast\footnote{\color{black}We assume a non-biased forecaster, such that the average forecast error $\mathbb{E} \left[ \tilde{\xi} \right]$ equals 0. Furthermore, we assume that the wind power units are dispatched in day-ahead at a value equal to the single-point forecast $\mu$.\color{black}}$^{,}$\footnote{\color{black}An alternative modeling approach would consider the renewable wind power units as dispatchable units which requires an additional decision variable modeling their day-ahead schedule. The main difference of this alternative compared to \eqref{wmuxi} pertains to the source of uncertainty (i.e., forecast errors in one case, and actual real-time wind power generation in the other one). We hypothesize that the innovation of incorporating the dependence information about the uncertain parameters into the definition of ambiguity set remains beneficial in both cases. The copula-based restriction results in an enhanced representation of dependencies whatsoever the underlying source of uncertainty is. Therefore, the choice of modeling approach will have a limited impact on the final conclusions of the paper.\color{black}}. In the following, we consider $\tilde{\xi}$ as the uncertain parameter in the distributionally robust OPF problem, and model it as a random variable described by the probability distribution $\mathbb{Q}$. \color{black} Note that the renewable production cost is assumed to be zero.\color{black}

Constraints \eqref{DRPmin} ensure that the generation level of conventional units lies within their maximum and minimum limits, i.e., $g^{\text{max}} \in \mathbb{R}^{\vert \mathcal{P} \vert}$ and $g^{\text{min}} \in \mathbb{R}^{\vert \mathcal{P} \vert}$. Constraints \eqref{DRRmax} enforce the maximum amount of reserve capacity $r^{\text{max}} \in \mathbb{R}^{\vert \mathcal{P} \vert}$ that can be provided by conventional units. Constraints \eqref{DRDAbalance} and \eqref{DRRTbalance}  ensure the day-ahead and real-time power balance, respectively. In particular, the day-ahead constraint  \eqref{DRDAbalance} enforces the sum of total production of conventional generating units $\mathds{1}^\top g$ and total forecasted production of renewable units $\mathds{1}^\top W \mu$ to be equal to total demand $\mathds{1}^\top d$. Note that the demands $d \in \mathbb{R}^{\vert \mathcal{D} \vert}$ are assumed inelastic to prices. In addition, $W \in \mathbb{R}^{\vert \mathcal{W} \vert \times \vert \mathcal{W} \vert}$ is a diagonal matrix of the installed capacity of renewable power units, whereas $\mu \in \mathbb{R}^{\vert \mathcal{W} \vert}$ gives their per-unit power generation forecast. The real-time balance in \eqref{DRRTbalance} is ensured via the elements of matrix $V$, the so-called participation factors, which can be interpreted as follows. The conventional unit $p$ responds to any deviation in renewable power generation of farm $w$, i.e., $\widetilde{\xi}_w$ (with respect to the day-ahead forecast $\mu_w$) based on its corresponding participation factors $V_{p,w}$. Therefore, for renewable power deviation $\widetilde{\xi}_w$, the recourse action of conventional unit $p$ is  $V_{p,w} \widetilde{\xi}_w$, such that the total recourse action $\sum_{p \in \mathcal{P}} V_{p,w} \widetilde{\xi}_w$ compensates the total deviation $W_{w,w} \widetilde{\xi}_w$. Note that \eqref{DRRTbalance} is an equality constraint, so $\widetilde{\xi}_w$ can be dropped from both sides. In the real-time operation, the recourse action $V_p \widetilde{\xi} = \sum_{w \in \mathcal{W}} V_{p,w} \widetilde{\xi}_w$ of conventional units should be limited to the reserve capacities procured in the day-ahead stage, i.e., $\overline{r}_p$ and $\underline{r}_p$. Similarly, the real-time flow within each transmission line $f \in \mathcal{F}$ should respect the capacity $f_f^\text{max}$. The real-time flows are expressed using the power transfer distribution factor matrices $\text{T}_f^{\mathcal{P}} \in \mathbb{R}^{\vert \mathcal{F} \vert \times \vert \mathcal{P} \vert}$, $\text{T}_f^{\mathcal{W}} \in \mathbb{R}^{\vert \mathcal{F} \vert \times \vert \mathcal{W} \vert}$ and $\text{T}_f^{\mathcal{D}} \in \mathbb{R}^{\vert \mathcal{F} \vert \times \vert \mathcal{D} \vert}$ for conventional generating units, renewable energy sources, and demands, respectively. Note that these restrictions are enforced via via probabilistic constraints, namely, Distributionally Robust Chance Constraints (DRCCs), for which we provide further details later in Section \ref{SubSect:Refor}. 

\color{black}
\subsection{The OPF Problem for Radial Distribution Systems}

In this section, we derive the distributionally robust day-ahead OPF problem for radial distribution systems using the \textit{LinDistFlow} approximation of AC power flow equations \cite{Lindistflow}. Given the generic ambiguity set $\mathcal{M}$, the model reads as

\begingroup
\allowdisplaybreaks
\begin{subequations}\label{DRACOPF}
\begin{align}
& \min _{g^{\dagger}, f^{\dagger}, V, u} \enspace \sum_{i \in \mathcal{N}} c_{i} g_{i}^{P}  + \underset{\mathbb{Q} \in \mathcal{M}}{\text{max}} \enspace \mathbb{E}^{\mathbb{Q}} \left[ \sum_{i \in \mathcal{N}} c_{i} V_{i} \widetilde{\xi}^P  \right] \label{ACDROPF_OF} \\
\text { s.t. } & g_{0}^{\dagger}=\sum_{i \in \mathcal{D}_{0}}\left(d_{i}^{\dagger}-g_{i}^{\dagger}\right)-\mu_i^\dagger, \, \dagger \in \left\lbrace P, Q \right\rbrace, \label{ACDROPF_1}\\
& u_{0}=1, \label{ACDROPF_2} \\
& f_{l}^{\dagger}=\sum_{i \in \mathcal{D}_{l}}\left(d_{i}^{\dagger}-g_{i}^{\dagger}-\mu_i^\dagger \right), \, \dagger \in \left\lbrace P, Q \right\rbrace, \forall l \in \mathcal{L}, \label{ACDROPF_3}\\
& u_{i}=u_{0}-2 \sum_{l \in \mathcal{R}_{i}}\left(f_{l}^{P} R_{l}+f_{l}^{Q} X_{l}\right), \forall i \in \mathcal{L}, \label{ACDROPF_4} \\
&\left(f_{l}^{P}\right)^{2}+\left(f_{l}^{Q}\right)^{2} \leq \bar{f}_{l}^{2}, \enspace \forall l \in \mathcal{L}, \label{ACDROPF_5} \\
& \sum\limits_{i \in \mathcal{N}} V_{i,w} + 1 = 0 \enspace \forall w \in \mathcal{W}, \label{DRRTPbalance} \\
& \underset{\mathbb{Q} \in \mathcal{M}}{\text{min}} \enspace \mathbb{Q} \left( \widetilde{g_{i}}^{\dagger}  \leq \bar{g}_{i}^{\dagger} \right) \geq 1 - \underline{\epsilon}_i^{\dagger}, \, \dagger \in \left\lbrace P, Q \right\rbrace, \enspace \forall i \in \mathcal{N}, \label{ACDROPF_6}\\
& \underset{\mathbb{Q} \in \mathcal{M}}{\text{min}} \enspace \mathbb{Q} \left( \underline{g}_{i}^{\dagger} \leq \widetilde{g_{i}}^{\dagger}  \right) \geq 1 - \underline{\epsilon}_i^{\dagger}, \, \dagger \in \left\lbrace P, Q \right\rbrace, \enspace \forall i \in \mathcal{N}, \label{ACDROPF_7}\\
& \underset{\mathbb{Q} \in \mathcal{M}}{\text{min}} \enspace \mathbb{Q} \left( \widetilde{u_{i}} \leq \bar{v}_{i}^{2} \right) \geq 1 - \underline{\epsilon}_i, \enspace \forall i \in \mathcal{N}_{\backslash 0},\label{ACDROPF_8} \\
& \underset{\mathbb{Q} \in \mathcal{M}}{\text{min}} \enspace \mathbb{Q} \left( \underline{v}_{i}^{2} \leq \widetilde{u_{i}} \right) \geq 1 - \underline{\epsilon}_i, \enspace \forall i \in \mathcal{N}_{\backslash 0},\label{ACDROPF_9}
\end{align}
\end{subequations}
\endgroup

\noindent where $\dagger = \left\lbrace P, Q\right\rbrace$ represents the superscripts related to, respectively, active and reactive power. The set $\mathcal{N}$ collects the nodes of the radial distribution system and the sets $\mathcal{D}_i$ and $\mathcal{R}_i$ respectively include the downstream and root nodes to node $i$. The set $l \in \mathcal{L}$ represents the set of distribution lines. Given that, the problem \eqref{DRACOPF} seeks the optimal generation dispatch of controllable generators $g^P \in \mathbb{R}^{\vert \mathcal{N} \vert}$ at the distribution level. The objective function \eqref{ACDROPF_OF} minimizes the total operating cost, composed of the day-ahead energy production cost $c \in \mathbb{R}^{\vert \mathcal{N} \vert}$ and the real-time operating cost of recourse actions. Similarly to model (10), the real-time cost is calculated in expectation for the worst-case distribution $\mathbb{Q}$ of wind power generation forecast errors $\widetilde{\xi}^\dagger \in \mathbb{R}^{\vert \mathcal{W} \vert}$ in the ambiguity set $\mathcal{M}$, and the recourse actions are modeled via linear decision rules, using $V \in \mathbb{R}^{\vert \mathcal{N} \vert \times \vert \mathcal{W} \vert}$. 

Constraint \eqref{ACDROPF_1} calculates the required active and reactive power injections $g_0^P \in \mathbb{R}$ and $g_0^Q \in \mathbb{R}$ at the interface between transmission and distribution systems (node 0) to balance the mismatch between consumption $d^\dagger \in \mathbb{R}^{\vert \mathcal{N} \vert}$ and generation $g^\dagger \in \mathbb{R}^{\vert \mathcal{N} \vert}$ in the downstream distribution system. Constraint \eqref{ACDROPF_2} imposes the voltage magnitude $u_0$ at the slack node, to be equal to 1 per-unit. Constraint \eqref{ACDROPF_3} calculates the active and reactive power flows $f_l^P \in \mathbb{R}$ and $f_l^Q \in \mathbb{R}$ in the distribution lines $l \in \mathcal{L}$. Constraint \eqref{ACDROPF_4} defines the per-unit square voltage $u_i \in \mathbb{R}$ at each node of the distribution system, given the per-unit resistances $R_l$ and reactances $X_l$, and the active and reactive power flows $f_l^P$ and $f_l^Q$ in the root lines. The apparent power flow $f_{l}^{2} \in \mathbb{R}$ is limited to $\bar{f}_{l}^{2} \in \mathbb{R}$ via \eqref{ACDROPF_5} for all lines in the distribution system. Constraints \eqref{DRRTPbalance} to \eqref{ACDROPF_8} impose the real-time operational restrictions. Constraint \eqref{DRRTPbalance} enforces the real-time balancing between generation and consumption, using the linear decision rules, similarly to model (10). The minimum and maximum operating limits of active and reactive power generation, and voltage magnitude are enforced via the DRCCs \eqref{ACDROPF_6} to \eqref{ACDROPF_9}. The rationale behind this is that the voltage magnitude and the active and reactive power in-feeds are real-time state variables that depend on the wind power uncertainty $\widetilde{\xi}^\dagger$. To appropriately model these real-time variations, we further define the real-time power generation of controllable generators as 
\begingroup
\begin{subequations}
\begin{align}\label{Addon1}
    \widetilde{g}_i^\dagger = g_i^\dagger + V_i \widetilde{\xi}^\dagger, \enspace \dagger \in \left\lbrace P, Q \right\rbrace,
\end{align}
\end{subequations}
\endgroup
\noindent where the first term refers to the day-ahead dispatch $g_i^\dagger$ and the second term refers to the real-time response $V_i \widetilde{\xi}^\dagger$ to deviations $\widetilde{\xi}^\dagger$, using the matrix of participation factors $V \in \mathbb{R}^{\vert \mathcal{G} \vert \times \vert \mathcal{W} \vert}$. In addition, we model the real-time flows $\widetilde{f}_l^P$ and $\widetilde{f}_l^Q$ as follows: 
\begingroup
\begin{subequations}
\begin{align}\label{Addon2}
    & \widetilde{f}_l^\dagger = f_l^\dagger + \sum_{i \in \mathcal{D}_{l}} \left( -V_i \widetilde{\xi}^\dagger - \widetilde{\xi}^\dagger_l \right), \enspace \dagger \in \left\lbrace P, Q \right\rbrace,
\end{align}
\end{subequations}
\endgroup
\noindent where the power flow $ f_l^\dagger $ scheduled in day-ahead is supplemented with the corresponding real-time deviation, composed of the deviation in renewable power generation and the compensation made by the controllable generators in the downstream nodes. Finally, the real-time square voltage $\widetilde{u}_i$ is expressed as follows: 
\begingroup
\begin{subequations}
\begin{align}\label{Addon3}
    \widetilde{u}_i = u_0 -2 \sum_{l \in \mathcal{R}_{i}}\left(\widetilde{f}_{l}^{P} R_{l}+\widetilde{f}_{l}^{Q} X_{l}\right),
\end{align}
\end{subequations}
\endgroup
\noindent where the real-time expression of power flows are used. The combination of \eqref{Addon1}, \eqref{Addon2} and \eqref{Addon3} with the model \eqref{DRACOPF} results in the complete distributionally robust OPF formulation for radial distribution systems. \color{black}

\subsection{Reformulation of \eqref{DR} and \eqref{DRACOPF}}
\label{SubSect:Refor}
\color{black}

The inequalities \eqref{DRRTdownflex} to \eqref{DRRTFmax} as well as \eqref{ACDROPF_6} to \eqref{ACDROPF_9} are enforced via DRCCs. These DRCCs state that the probabilistic constraints within parentheses should be respected under the worst-case distribution $\mathbb{Q}$ within the ambiguity set $\mathcal{M}$ with a probability not lower than $1-\epsilon$. Note that the value of parameter $\epsilon \in \mathbb{R}$ lies between zero and one, fixed by the power system operator. 

The procedure to reformulate these DRCCs is the same for all inequalities considered in both power system applications. For this purpose, we use a Conditional-Value-at-Risk (CVaR) approximation \cite{Zymler}. \color{black} Consider a generic DRCC in the form of 
\begin{subequations}
\begin{align}
\underset{\mathbb{Q} \in \mathcal{M}}{\text{min}} \enspace \mathbb{Q} \left( . \leq 0 \right) \geq 1 - \epsilon. \label{CVaRReforq}
\end{align}

In order to get rid of the probability operator $\mathbb{Q} \left( . \right)$, the DRCC \eqref{CVaRReforq} can be approximated by the following CVaR constraint:
\begin{align}
\underset{\mathbb{Q} \in \mathcal{M}}{\text{max}} \; \mathbb{Q}\text{-CVaR}_\epsilon ( . ) \leq 0. \label{CVaRReforqw}
\end{align}

The CVaR operator in the left-hand side of \eqref{CVaRReforqw} is defined as 
\begin{align}
\underset{\tau \in \mathbb{R}}{\text{min}} \enspace \tau + \frac{1}{\epsilon} \enspace \underset{\mathbb{Q} \in \mathcal{M}}{\text{max}} \enspace \mathbb{E}^\mathbb{Q} \left[ \lceil . - \tau \rceil^+ \right],  \label{CVaRRefor}
\end{align}
\end{subequations}
\noindent where $\tau \in \mathbb{R}$ is an auxiliary variable and $\lceil . \rceil^+ = \max \left( ., 0\right)$. By this approximation, the worst-case expectation problem  appears not only in the objective function \eqref{DROF}, but also in every approximated DRCC in the form of \eqref{CVaRRefor}. Note that the inequalities \eqref{DRRTdownflex} to \eqref{DRRTFmax} as well as \eqref{ACDROPF_6} to \eqref{ACDROPF_9} are linear. Therefore, the worst-case expectation problem appearing in \eqref{CVaRRefor} complies with the setting of Theorem 1. \color{black} In the following, we provide all reformulations required to solve the distributionally robust OPF problem under both ambiguity sets $\mathcal{M}_1$ and $\mathcal{M}_2$, in a generic way such that it can be applied to both DC and AC formulations. \color{black}




\subsubsection{Metric-Based Ambiguity Set $\mathcal{M}_1$}
\label{SubSect:MetricBasedAmbiguitySet}

Reference \cite{Kuhn} provides the reformulation of a generic worst-case expectation problem in the form of $\underset{\mathbb{Q} \in \mathcal{M}_1}{\max} \enspace \mathbb{E}^\mathbb{Q} \left[ a \left( x \right)^\top \widetilde{\xi} + b \left( x \right) \right]$. Similarly to  Theorem \ref{TheoremCopula}, this maximization over probability distributions $\mathbb{Q} \in \mathcal{M}_1$ can be recast into a minimization problem at the cost of introducing a set of additional auxiliary variables. With this reformulation, the objective function \eqref{DROF} \color{black} or \eqref{ACDROPF_OF} \color{black} would contain two min operators that can be merged, yielding a formulation that can be fed into an off-the-shelf solver. Next, we reformulate the worst-case expectation appeared in constraints \eqref{DRRTupflex} to \eqref{DRRTFmax} after using the CVaR approximation. Following \cite{Kuhn}, the minimization operators appeared in the constraints can be eventually dropped, but again at the cost of additional auxiliary variables. For the sake of conciseness and completeness, we provide the reformulation of a generic worst-case expectation problem, as well as the final reformulation of \color{black} problems \eqref{DR} and \eqref{DRACOPF} \color{black} under ambiguity set $\mathcal{M}_1$ in the online companion  \cite{OnlineAppendix}. 

\subsubsection{Copula-Based Ambiguity Set $\mathcal{M}_2$}
\label{SubSect:CopulaBasedAmbiguitySet}

The general procedure to reformulate \color{black} \eqref{DR} and \eqref{DRACOPF} \color{black} under ambiguity set $\mathcal{M}_2$ is similar to the one under set $\mathcal{M}_1$. However, instead of following the reformulations in \cite{Kuhn}, we apply the outcomes of Theorem \ref{TheoremCopula} in Section \ref{Sect:Reformulation}. Recall that the solution approach for Theorem \ref{TheoremCopula} proposed in Appendix \ref{SolutionApproachAppendix}, allows us to recast the maximization problem over probability distributions $\mathbb{Q} \in \mathcal{M}_2$ as a minimization problem, but at the cost of additional auxiliary variables. We provide the final reformulation of \color{black} problems \eqref{DR} and \eqref{DRACOPF} \color{black} under ambiguity set $\mathcal{M}_2$  in the online companion  \cite{OnlineAppendix}.

\color{black}
\section{Numerical Experiments}
\label{Sect:NumericalStudy}

We first perform numerical experiments for a meshed transmission system considering the distributionally robust OPF problem \eqref{DR}. We discuss the procedure of out-of-sample analysis, the computational performance and the out-of-sample performance. Next, we introduce a distribution system case study and derive similar experiments considering the distributionally robut OPF problem \eqref{DRACOPF}. 

Our meshed transmission system case study is based on 
\color{black} a slightly updated version of the $24$-node IEEE reliability test system \cite{24IEEEChristos}, whose input data are provided in the online companion \cite{OnlineAppendix}. This case study is composed of $12$ conventional generating units with an aggregate capacity of $2,362.5$ MW, two wind farms with a total maximum installed capacity of $1,000$ MW, and eventually $17$ loads  with an aggregate demand of $2,207$ MW. \color{black} These power suppliers and demands are connected through a network composed of 24 nodes and 34 transmission lines. \color{black}The sole source of uncertainty is the deviation of renewable power generation in the real-time operation with respect to the day-ahead forecast values. To cope with such an uncertainty, the system operator reserves a fraction of capacities of conventional units in the day-ahead stage, and activates them in the real-time operation, if necessary. The total maximum reserve capacity that  conventional units can provide  is $798$ MW. 



\color{black}
To assess the impacts of dependence structure on operational decisions of the system operator, \color{black} we generate a dataset of $1,000$ samples representing the historical wind power observations. To do so, we use the package \textit{DatagenCopulaBased} v1.3.0 in Julia programming language v1.4.2, which allows to generate samples with a predefined dependence structure (e.g., following the Gaussian copula). The resulting dataset mimics historical wind power observations from which we retrieve the mean value $\mu$ corresponding to the day-ahead forecast. The final dataset of forecast errors and its corresponding copula will be illustrated later in Fig. \ref{Independence}.
\color{black}

\subsection{Procedure of the Out-of-Sample Analysis}
\label{SubSect:OSA}
Aiming to conduct an ex-post out-of-sample analysis and therefore to compare different models on a fair basis, we split the dataset with $1,000$ samples into two different sets of samples. The first one contains $30$ samples only (the in-sample data), which are used to characterize the uncertain wind power generation within the models. Therefore, $N=30$ and indices $i$ and $j$ in \eqref{final2} run from $1$ to $30$. The remaining $970$ samples in the dataset are used as unseen wind power realizations to assess the quality of  operational decisions made in the day-ahead stage. 

\color{black} The out-of-sample analysis is as follows. We first solve problem \eqref{DR} under ambiguity sets $\mathcal{M}_1$ and $\mathcal{M}_2$, fed with in-sample dataset\footnote{\color{black} Recall that the ambiguity set $\mathcal{M}_1$ defines the state-of-the-art Wasserstein metric-based ambiguity set, which serves as the benchmark solution in the subsequent numerical experiments, for assessing the performances of the proposed copula-based ambiguity set $\mathcal{M}_2$ under various values of $\theta_1$ and $\theta_2$.}, and obtain the optimal day-ahead decisions $g$, $\underline{r}$, and $\overline{r}$. \color{black} Given fixed values of those day-ahead decisions, we then solve a deterministic optimization problem in the real-time operation for each of $970$ unseen samples, whose outcomes are the recourse action of conventional generating units as well as the involuntarily load shedding and the wind curtailment as two extreme recourse actions. 
The involuntarily load shedding is required when there is a wind power deficit in the real time, and that the upward reserve capacities provided by conventional units are insufficient to compensate the entire deficit. Similarly, the wind curtailment occurs when there is a wind power excess in the real time, combined with the lack of downward reserve capacities provided by conventional units to absorb all available wind power. The formulation of this deterministic model is available in the online companion \cite{OnlineAppendix}. Once this deterministic optimization problem is solved $970$ times for each model, we calculate the average real-time operational cost of the system. In the rest of this section, we report the out-of-sample cost, which is the sum of operational cost of the system in the day-ahead stage, i.e., the first three terms in \eqref{DROF}, and the average real-time operational cost calculated from the out-of-sample analysis.



\subsection{Computational Performance}
\label{SubSect:ComputationalIssue}

All models are solved using Gurobi v8.0.1 in JuMP v0.21.3 under programming language JuliaPro v1.4.2 on a usual computer clocking at $2.2$ GHz with $16$ GB of RAM. All source codes are publicly available in the online companion \cite{OnlineAppendix}.

\color{black} Fig. \ref{TimeVSN} shows the computational time as a function of the number of historical in-sample observations $N$ under different settings. We observe that the computational time increases with the number of historical observations in a non-linear manner. These results suggest that the computational time growth follows a quadratic trend, which is in line with the increase in the number of variables and constraints --- see Theorem 1, where the number of certain variables and constraints increases quadratically\footnote{Compared to other uncertainty modeling techniques, e.g., scenario-based stochastic programming, the DRO approach generally provides more qualified decisions in terms of the out-of-sample performance, when the number of historical in-sample observations is relatively low. This may further motivate the use of DRO when there is limited historical data, or when  the decision-maker aims to reduce the computational time by intentionally reducing the number of historical observations. As shown in Fig. \ref{TimeVSN}, the computational time for the DRO approach is satisfactory when $N$ is comparatively low.} in $N$. In addition, Fig. \ref{CopulaTimes} depicts the computational time for various values of parameters $\theta_1$ and $\theta_2$ in the proposed ambiguity set $\mathcal{M}_2$, and for $\epsilon = 0.01$ and $0.03$. The results highlight that the computational time decreases when $\theta_2$ increases. \color{black} Recall that two wind farms only have been considered so far. \color{black}We will investigate later in Section \ref{Sect:Scalability} the scalability of the proposed model with respect to the dimensionality of the uncertainty space by increasing the number of wind farms\footnote{\color{black}In this paper, we consider the scalability of our proposed framework with respect to the number of in-sample and the number of wind farms, as these indicators directly impact the description of uncertainty. The scales of networks used in these simulations are usually deemed acceptable for numerical experiment purposes, and allow us to bring valuable and informative discussions on the outcomes of the proposed method.}.\color{black} 

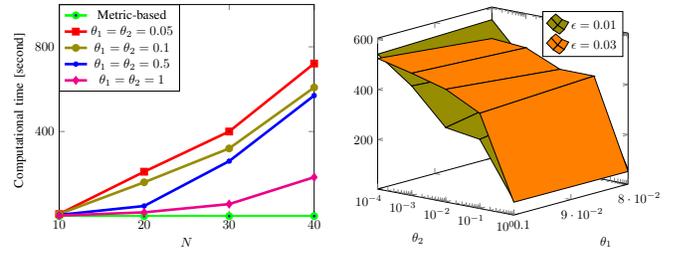
\begin{figure}[]
\subfigure[\color{black} Computational time as a function of the number of historical in-sample observations $N$. Fixed value: $\epsilon = 0.05$.]{\resizebox {0.48\columnwidth} {!} {\begin{tikzpicture} 
\begin{axis}[enlargelimits=false,xmin=10, xmax=40,
    ymin=0, ymax=1000,
    xtick={10,20,30,40,50},
    ytick={400,800,1200,1600,2000},
    xlabel = $N$,
    ylabel = {Computational time [second]},
    legend style={at={(0,1)},anchor=north west},] 
\addplot+[samples=400,color=green,line width = 2pt,] 
coordinates {(10,0)(20,0.1)(30,0.2)(40,0.3)}; 
\addlegendentry{Metric-based}
\addplot+[samples=400,color=red,line width = 2pt,] 
coordinates {(10,10)(20,210)(30,400)(40,721)}; 
\addlegendentry{$\theta_1 = \theta_2 = 0.05$}
\addplot+[samples=400,color=olive,line width = 2pt,] 
coordinates {(10,10)(20,160)(30,320)(40,608)}; 
\addlegendentry{$\theta_1 = \theta_2 = 0.1$}
\addplot+[samples=400,color=blue,line width = 2pt,] 
coordinates {(10,6)(20,48)(30,260)(40,570)}; 
\addlegendentry{$\theta_1 = \theta_2 = 0.5$}
\addplot+[samples=400,color=magenta,line width = 2pt,] 
coordinates {(10,3)(20,18)(30,57)(40,184)}; 
\addlegendentry{$\theta_1 = \theta_2 = 1$}
\end{axis} 
\end{tikzpicture} \label{TimeVSN}}} 
 	\enspace
 	\subfigure[\color{black} Computational time as a function of parameters $\theta_1$ and $\theta_2$. Fixed values: $N = 25$. The metric-based ambiguity set approach finds a solution in less than 1 second for the values of $\theta_1$ explored in this study. ]{\resizebox {0.48\columnwidth} {!} {\begin{tikzpicture}
\begin{semilogyaxis}[view={130}{15}, xlabel=$\theta_1$, ylabel=$\theta_2$]
\addplot3[surf, mesh/cols=5, mesh/rows=2, color=olive, shader=flat, draw = black,] 
file {DataFigure/Eps01.txt};
\addlegendentry{$\epsilon = 0.01$}
\addplot3[surf, mesh/cols=5, mesh/rows=2, color=orange, shader=flat, draw = black,
] 
file {DataFigure/Eps03.txt};
\addlegendentry{$\epsilon = 0.03$}
\end{semilogyaxis}
\end{tikzpicture} \label{CopulaTimes}}}
 	\caption{\color{black} Computational study.} 
 	    \vspace{-0.1cm}
\end{figure}

In the next two subsections, we investigate the out-of-sample performance of the DRO model with the ambiguity sets $\mathcal{M}_1$ and $\mathcal{M}_2$. Recall that the ambiguity set $\mathcal{M}_1$ contains one risk-tuning parameter only\footnote{The violation probability $\epsilon_{\text{(.)}}$ of each DRCC  can also be seen as a risk-tuning parameter. However, we keep this value unchanged (fixed to $0.05$) in our numerical study, and focus on the risk-tuning parameters related to the proposed ambiguity set.}, i.e., $\theta_1$, that restricts the distance of distributions within the ambiguity set to the empirical one. In contrast, the ambiguity set  $\mathcal{M}_2$ comprises of two risk-tuning parameters, i.e., $\theta_1$ and $\theta_2$, where $\theta_2$ restricts the similarity of  distributions within the ambiguity set in terms of the dependence structure to that of the empirical one. 
For the ease of comparison, we  alternately fix one of the two risk-tuning parameters $\theta_1$ and $\theta_2$, while varying the other one.

\begin{figure*}[t!]
\subfigure[Historical wind power generation of two farms (red plot) and the corresponding empirical copula (blue plot)]{\resizebox {0.64\columnwidth} {!} {\begin{tikzpicture} 
\begin{axis}[enlargelimits=false, xmin=0, ymin=0, xmax=1, ymax=1, xlabel={\fontsize{12}{14.4}\selectfont $\mu_1 + \widetilde{\xi}_1$ [p.u.]}, ylabel={\fontsize{12}{14.4}\selectfont $\mu_2 + \widetilde{\xi}_2$ [p.u.]}] 
\addplot+[only marks,samples=400,color=red, mark = +, fill = red] 
file {DataFigure/IndependentCopulaSample.dat};
 \coordinate (insetPosition) at (rel axis cs:1.15,1.15);
\end{axis} 

\begin{axis}[axis background/.style={fill=white!100}, enlargelimits=false,xmin=0, ymin=0, xmax=1, ymax=1,xtick={0,1}, ytick={0,1}, at={(insetPosition)},anchor={outer north east},footnotesize, xlabel=$U_1$, ylabel=$U_2$]
\addplot+[only marks,samples=400,color=blue, mark = +, fill = blue] 
file {DataFigure/F_xi_Independent.dat};
\end{axis}
 
\end{tikzpicture} \label{Independence}}} 
 	\enspace
 	\subfigure[Out-of-sample cost as a function of $\theta_1$ considering different values for $\theta_2$]{\resizebox {0.66\columnwidth} {!} {\begin{tikzpicture} 
\begin{semilogxaxis}[enlargelimits=false,xmin=0.0001, xmax=1,
    ymin=25000, ymax=31000,
    xtick={10e-4,10e-3,10e-2,10e-1,10e0,10e1,10e2,10e3,10e4},
    ytick={25000,26000,27000,28000,29000,30000,31000,32000},
    legend style={at={(0,1)},anchor=north west},
    xlabel=$\theta_1$,
    ylabel= Operational cost of the system (\$)], 
\addplot+[samples=400, color=black] 
file {DataFigureV3/IndependentDROWithoutDependenceN30_Exp2.dat}; 
\addlegendentry{$\mathcal{M}_1$}
\addplot+[samples=400, color=blue] 
file {DataFigureV3/CopulaDependentOPF_Independent_rho2_01_Exp2.dat}; 
\addlegendentry{$\theta_2$ = 0.1}
\addplot+[samples=400, color=brown] 
file {DataFigureV3/CopulaDependentOPF_Independent_rho2_002_Exp2.dat};  
\addlegendentry{$\theta_2$ = 0.02}
\addplot+[samples=400, color=red] 
file {DataFigureV3/CopulaDependentOPF_Independent_rho2_001_Exp2.dat}; 
\addlegendentry{$\theta_2$ = 0.01}

\addplot[name path = A,	opacity = 0] 
	file {DataFigureV3/IndependentDROWithoutDependenceN30_stdp2.dat};
\addplot[name path = D, opacity = 0] 
	file {DataFigureV3/IndependentDROWithoutDependenceN30_stdm2.dat};
\addplot[name path = B,	opacity = 0] 
	file {DataFigureV3/CopulaDependentOPF_Independent_rho2_01_stdm2.dat};
\addplot[name path = C, opacity = 0] 
	file {DataFigureV3/CopulaDependentOPF_Independent_rho2_01_stdp2.dat};
\addplot[name path = E,	opacity = 0] 
	file {DataFigureV3/CopulaDependentOPF_Independent_rho2_001_stdm2.dat};
\addplot[name path = F, opacity = 0] 
	file {DataFigureV3/CopulaDependentOPF_Independent_rho2_001_stdp2.dat};
\addplot[name path = H,	opacity = 0] 
	file {DataFigureV3/CopulaDependentOPF_Independent_rho2_002_stdm2.dat};
\addplot[name path = I, opacity = 0] 
	file {DataFigureV3/CopulaDependentOPF_Independent_rho2_002_stdp2.dat};

\addplot [blue,fill=black!90!black,opacity=0.3]
    fill between[of=A and D];
\addplot [blue,fill=blue!90!black,opacity=0.3]
    fill between[of=B and C];
\addplot [red,fill=red!90!black,opacity=0.3]
    fill between[of=E and F];
\addplot [brown,fill=brown!90!black,opacity=0.3]
    fill between[of=H and I];

\end{semilogxaxis} 
\end{tikzpicture} \label{Indtheta2}}}
\enspace
 	\subfigure[Out-of-sample cost as a function of $\theta_2$ considering different values for $\theta_1$]{\resizebox {0.66\columnwidth} {!} {\begin{tikzpicture} 
\begin{semilogxaxis}[enlargelimits=false,xmin=0.0001, xmax=1,
    ymin=25000, ymax=30000,
    xtick={10e-4,10e-3,10e-2,10e-1,10e0,10e1,10e2,10e3,10e4},
    ytick={25000,26000,27000,28000,29000,30000,31000,32000},
    legend style={at={(0,0)},anchor=south west},
    xlabel=$\theta_2$,
    ylabel= Operational cost of the system (\$)], 
\addplot+[samples=400] 
file {DataFigureV3/CopulaDependentOPF_Independent_rho1_01_Exp2.dat}; 
\addlegendentry{$\theta_1$ = 0.1}
\addplot+[samples=400] 
file {DataFigureV3/CopulaDependentOPF_Independent_rho1_008_Exp2.dat};  
\addlegendentry{$\theta_1$ = 0.08}
\addplot+[samples=400] 
file {DataFigureV3/CopulaDependentOPF_Independent_rho1_005_Exp2.dat};  
\addlegendentry{$\theta_1$ = 0.05}

\addplot[name path = B,	opacity = 0] 
	file {DataFigureV3/CopulaDependentOPF_Independent_rho1_01_stdm2.dat};
\addplot[name path = C, opacity = 0] 
	file {DataFigureV3/CopulaDependentOPF_Independent_rho1_01_stdp2.dat};
\addplot[name path = E,	opacity = 0] 
	file {DataFigureV3/CopulaDependentOPF_Independent_rho1_005_stdm2.dat};
\addplot[name path = F, opacity = 0] 
	file {DataFigureV3/CopulaDependentOPF_Independent_rho1_005_stdp2.dat};
\addplot[name path = H,	opacity = 0] 
	file {DataFigureV3/CopulaDependentOPF_Independent_rho1_008_stdm2.dat};
\addplot[name path = I, opacity = 0] 
	file {DataFigureV3/CopulaDependentOPF_Independent_rho1_008_stdp2.dat};

\addplot [blue,fill=blue!90!black,opacity=0.3]
    fill between[of=B and C];
\addplot [red,fill=brown!90!black,opacity=0.3]
    fill between[of=E and F];
\addplot [brown,fill=red!90!black,opacity=0.3]
    fill between[of=H and I];

\end{semilogxaxis} 
\end{tikzpicture}\label{Indtheta1}}} 
 	\caption{Numerical study. The shaded area around each curve represents the corresponding standard deviation. } 
 	\label{NumericalStudy}
 	\vspace{-0.1cm}
\end{figure*}
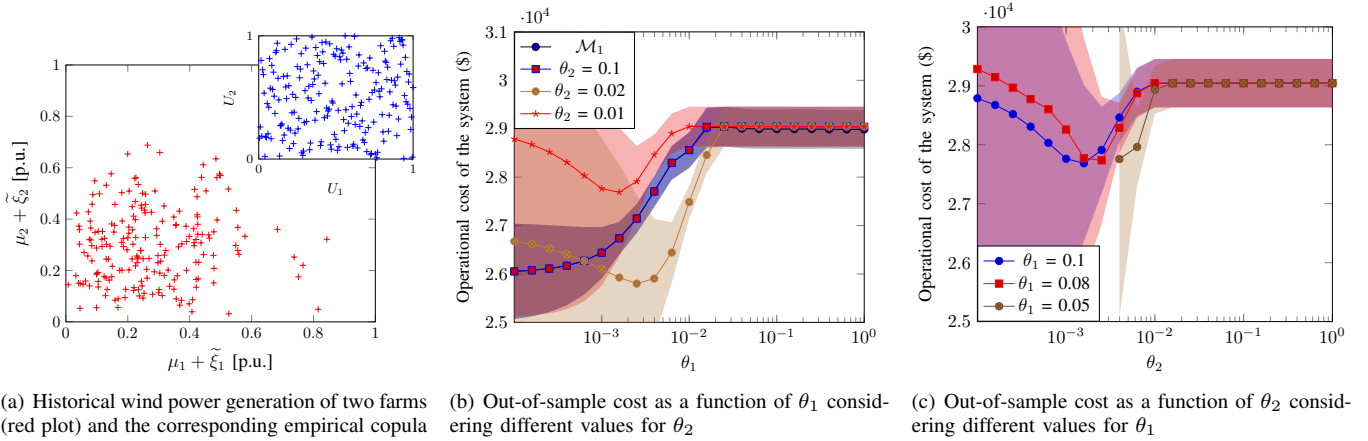

\color{black}




\subsection{Out-of-Sample Performance: Operational Cost of the System as a Function of $\theta_1$}
\label{SectOSA1}
We assign three different values to parameter $\theta_2$, namely $0.01$, $0.02$, and $0.1$. We vary the value of parameter $\theta_1$ from $10^{-4}$ to $10^{0}$, where the exponent increases linearly with a step of $0.2$, and compare the out-of-sample performance of the DRO model with the ambiguity sets $\mathcal{M}_1$ and $\mathcal{M}_2$. We retrieve the average out-of-sample operational cost of the system. In addition, the standard deviation of the cost over $970$ out-of-sample simulations is calculated. These results are illustrated  in logarithmic Fig. \ref{Indtheta2}.


One can observe that the results obtained from the DRO model with the ambiguity set $\mathcal{M}_1$ and those with the ambiguity set $\mathcal{M}_2$ when $\theta_2$ is comparatively high (e.g., $0.1$) are similar. This implies that the second constraint in $\mathcal{M}_2$ regarding the distance with respect to the empirical copula is not binding, and therefore the worst-case distributions for which the optimal decisions are made are identical in both DRO models. In other words, the ambiguity set $\mathcal{M}_2$ is identical to $\mathcal{M}_1$ when $\theta_2$ takes comparatively high values. \color{black} In contrast, when the value of $\theta_2$ decreases, e.g., to $0.02$, the second constraint in $\mathcal{M}_2$ becomes binding, leading to improved results in terms of the out-of-sample cost. This improvement has led to the lowest out-of-sample cost, since  the constraint on dependence structure eliminates the unrealistic distributions from the ambiguity set. This numerical finding suggests that the proposed ambiguity set $\mathcal{M}_2$ outperforms  $\mathcal{M}_1$, provided that appropriate values for parameters  $\theta_1$ and $\theta_2$ are selected (i.e., $\theta_1 = 0.0025$ and $\theta_2 = 0.02$). Given these appropriate values for $\theta_1$ and $\theta_2$, we observe a potential cost saving of 1\% in expectation with respect to the classical ambiguity set $\mathcal{M}_1$ given the value for $\theta_1$ = 10$^{-4}$, which achieves the minimal expected total cost for $\mathcal{M}_1$.\color{black}

It is worth mentioning that the DRO model becomes infeasible, when both parameters $\theta_1$ and $\theta_2$ take very low values, meaning that both constraints in $\mathcal{M}_2$ are highly restrictive. \color{black} \color{black} We have observed infeasibility for values of $\theta_1$ and $\theta_2$ lower than 0.01 in our simulations in Section V-C and for values lower than $\theta_1 = 0.05$ and $\theta_2 = 0.004$ in our simulations in Section V-D. One can intuitively interpret the observation on infeasibility as a case under which the two Wasserstein balls, defined by two constraints in $\mathcal{M}_2$, have no intersection. In other words, there is no distribution within the ambiguity set that satisfies both constraints at the same time.

\begin{figure*}

\subfigure[Comparison of the total upward reserve capacity dispatched (i.e., $\mathds{1}^\top \overline{r}$), obtained from the DRO model under different settings. Fixed values: $\epsilon = 0.05$, $\theta_1 = 0.1$, and $N = 30$.]{\resizebox {0.66\columnwidth} {!} {\begin{tikzpicture}
\begin{axis}[
    ybar,
    enlargelimits=0.5,
    legend style={at={(0.5,-0.15)},
      anchor=north,legend columns=-1},
    ylabel={Total upward reserve capacity [MW]},
    symbolic x coords={A},
    xtick=data,
    xticklabels={},
    nodes near coords,
    nodes near coords align={vertical},
    bar width=25pt,
    ]
\addplot coordinates {(A,266.9)};
\addplot coordinates {(A,174.2)};;
\addplot coordinates {(A,266.9)};;
\legend{$\mathcal{M}_1$,$\mathcal{M}_2$ ($\theta_2 = 0.001$),$\mathcal{M}_2$ ($\theta_2 = 0.1$)}
\end{axis}
\end{tikzpicture} \label{DispatchRU}}} 
 	\enspace
 	\subfigure[Computational time as a function of the number of wind farms. Fixed values: $\theta_1 = 0.1$, $\epsilon = 0.05$, and $N = 15$.]{\resizebox {0.66\columnwidth} {!} {\begin{tikzpicture}
\begin{axis}[
    enlargelimits=false,xmin=1, xmax=11,
    ymin=0, ymax=500,
    xtick={2,4,6,8,10,12},
    ytick={50,100,150,200,250,300,350,400,450,500},
    xlabel = {Number of wind farms},
    ylabel = {Computational time [second]},
    legend columns=1,
    legend style={at={(0,1)},anchor=north west},
  ]
    \addplot+[samples=400,line width = 2pt,color=black] 
    file {DataFigure/CopulaDependent_Independent_scal_01_TIME_N15v3.dat}; 
\addlegendentry{$\theta_2$ = 0.1}
\addplot+[samples=400,line width = 2pt,]
file {DataFigure/CopulaDependent_Independent_scal_002_TIME_N15v3.dat}; 
\addlegendentry{$\theta_2$ = 0.02}
\addplot+[samples=400,line width = 2pt,]
file {DataFigure/CopulaDependent_Independent_scal_001_TIME_N15v32.dat}; 
\addlegendentry{$\theta_2$ = 0.01}
\addplot+[samples=400,line width = 2pt,color=blue]
file {DataFigure/StandardDRO_Independent_scal_001_TIME_N15v3.dat}; 
\addlegendentry{$\mathcal{M}_1$}
 \coordinate (insetPosition) at (rel axis cs:0,0.5);
\end{axis} 
\end{tikzpicture}\label{FigScalabilityTime}}}
\enspace
\subfigure[Out-of-sample operational cost of the system as a function the number of wind farms. Legends are the same at those in plot (b). Fixed values: $\theta_1 = 0.1$, $\epsilon = 0.05$, and $N = 15$.]{\resizebox {0.66\columnwidth} {!} {
\begin{tikzpicture}

\begin{axis}[
      ymin=17500, ymax=21500,
    ytick={18000,19000,20000,21000},
    xtick={2,4,6,8,10,12},
    xlabel = {Number of wind farms},
    ylabel = {Operational cost of the system [\$]},
    enlargelimits=false,
    legend style={at={(0,0)},anchor=north west},
    axis background/.style={fill=white!100},
    ]

    \addplot+[samples=400,line width = 2pt,dashed,color=black] 
file {DataFigure/CopulaDependent_Independent_scal_01_Exp_N15.dat}; 
\addplot+[samples=400,line width = 2pt,dashed] 
file {DataFigure/CopulaDependent_Independent_scal_002_Exp_N15.dat};
\addplot+[samples=400,line width = 2pt,dashed] 
file {DataFigure/CopulaDependent_Independent_scal_001_Exp_N15.dat};
\addplot+[samples=400,line width = 2pt,dashed,color=blue] 
file {DataFigure/StandardDRO_Independent_scal_001_p_N15_Exp.dat};
    
  \end{axis}
 
\end{tikzpicture} \label{FigScalabilityCost}}}
 	\caption{Operational results with two (the first plot) and more (the next two plots) wind farms. The aggregate capacity of  farms is always equal to $1,000$ MW. } 
 	    \vspace{-0.1cm}
\end{figure*}
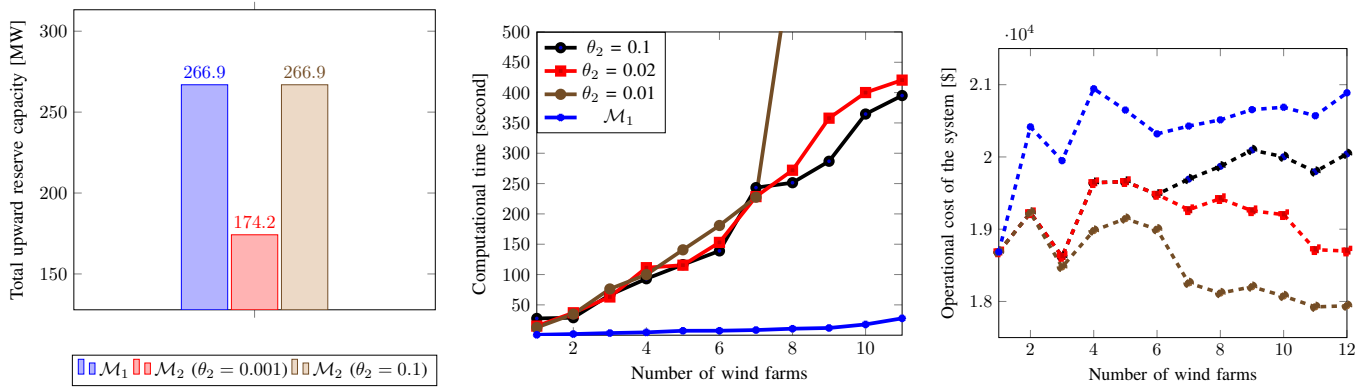

\subsection{Out-of-Sample Performance: Operational Cost of the System as a Function of $\theta_2$}
\label{SectOSA2}
We fix the value of parameter $\theta_1$ to either $0.05$, $0.08$, or $0.1$, and report the out-of-sample operational cost of the system for different values of $\theta_2$. In particular, this value ranges from $10^{-4}$ to $10^{0}$, where the exponent increases linearly with a step equal to $0.2$. Fig. \ref{Indtheta1} shows the out-of-sample results in a logarithmic scale.

\color{black} Our first observation is that for comparatively high values of $\theta_2$, the outcomes of the DRO model with the ambiguity set $\mathcal{M}_2$ are very similar to those with the ambiguity set $\mathcal{M}_1$. This numerically indicates that the copula constraint is not binding. Second, we observe that the cost curves reach  a minimum at intermediate values for $\theta_2$, suggesting that an optimal value for the parameter $\theta_2$ exists. This difference between the outcomes obtained by the DRO model with $\mathcal{M}_1$ and those with $\mathcal{M}_2$, given the optimal value for $\theta_2$, represents the maximum cost earned due to the use of the more elegant $\mathcal{M}_2$ instead of $\mathcal{M}_1$, for fixed value of $\theta_1$. For instance, when $\theta_1 = 0.1$, the optimum is obtained for $\theta_2 = 0.0016$, and it corresponds to a cost saving of 4.7\% compared to $\mathcal{M}_1$ with the same value of $\theta_1 = 0.1$. \color{black} Third, we observe that the program becomes infeasible for comparatively low values of $\theta_2$, when $\theta_1$ is also low (e.g., $0.01$), confirming our intuition in the previous analysis.

\color{black}

\color{black}

\begin{remark}[On the selection of $\theta_1$ and $\theta_2$ by the system operator]
It is worth mentioning that when the decision-maker has less confidence in the available representation of uncertainty and dependence structure (e.g., when the number of historical in-sample observations is relatively low), the system operator should select $\theta_1$ and $\theta_2$ as large values, e.g., $\theta_1 = \theta_2 = 1$. This would result in similar outcomes to those obtained by the state-of-the-art metric-based ambiguity set. Next, by slightly decreasing the value of $\theta_1$ and $\theta_2$ from day-to-day, the system operator should observe a decrease in the average total operating cost, followed by an increase (corresponding to a displacement from right to left on the curves of Fig. 2(b) and 2(c)). The meaning behind this observation is that the optimal values for $\theta_1$ and $\theta_2$ have been found and should no longer be decreased, for the corresponding OPF problem to be solved.
\end{remark}

\color{black}

\subsection{Operational Decisions}

We explore the impacts of the proposed ambiguity set $\mathcal{M}_2$ on the resulting operational decisions. Fig. \ref{DispatchRU}  reports the total upward reserve capacity, i.e., $\mathds{1}^\top \overline{r}$, procured from conventional units in the day-ahead stage. In particular, we solve the DRO model under three different settings: (\textit{i}) with the ambiguity set $\mathcal{M}_1$, (\textit{ii}) with the ambiguity set $\mathcal{M}_2$ where $\theta_2=0.001$, and (\textit{iii}) with the ambiguity set $\mathcal{M}_2$ where $\theta_2=0.1$. In all three cases, we fix the value of $\theta_1$ to be $0.1$. 
We observe that when $\theta_2$ takes a comparatively higher value (i.e., $0.1$), the resulting operational decisions obtained from the DRO model with the ambiguity set $\mathcal{M}_2$ are identical to those with the ambiguity set $\mathcal{M}_1$. This confirms our earlier observation that the copula constraint in $\mathcal{M}_2$ is non-binding. 
On the contrary, when the parameter $\theta_2$ is given an appropriate value (here, e.g., $0.001$), not only the out-of-sample cost (see Fig. \ref{Indtheta1}) but also the total amount of reserve capacity (see Fig. \ref{DispatchRU}) decreases. This interesting observation implies that by elegantly incorporating the dependence structure into the ambiguity set definition, the DRO model will provide \textit{less} conservative operational decisions.

\color{black}
\subsection{Impacts of Increasing the Number of Wind Farms}
\label{Sect:Scalability}

We increase the number of wind farms connected to the system, enlarging the dimension of the uncertainty space. We consider up to 12 wind farms, which represent the number of wind uncertainty sources which is typically considered for studies related to the real-life Belgian electricity grid (i.e., one source per province and an additional one for off-shore wind). 
We re-scale the wind farm capacities, such that the aggregate capacity of wind farms is always equal to $1,000$ MW. We consider $15$ historical observations. 


Fig. \ref{FigScalabilityTime} shows the computational time  to solve the problem \eqref{DR} with both ambiguity sets $\mathcal{M}_1$ (metric-based) and $\mathcal{M}_2$ (with different values of $\theta_2$). The value of parameter $\theta_1$ is fixed to $0.1$ in all cases. In most cases, the computational time increases linearly with the number of wind farms. However, this increase is drastic when the value of $\theta_2$ is comparatively low, indicating the case with a tight copula constraint. 

\color{black} Fig. \ref{FigScalabilityCost} illustrates the out-of-sample operational cost of the system as a function of the number of wind farms. Legends are the same as those in Fig. \ref{FigScalabilityTime}. There are two important observations. First, we obtain lower values for the operational cost under the ambiguity set $\mathcal{M}_2$, in particular when the value of $\theta_2$ is comparatively low, e.g., $0.01$ or $0.02$, meaning that the copula constraint is binding and that the dependence structure plays a role on the collection of distributions within $\mathcal{M}_2$. Second, the operational cost decreases by increasing the number of wind farms, provided that the value of $\theta_2$ is comparatively low. Therefore, the benefits of using $\mathcal{M}_2$ instead of $\mathcal{M}_1$ increases when the number of wind farms is comparatively high. Both observations highlight the importance of adding dependence structure in the definition of the ambiguity set, particularly when the number of wind farms is relatively high. \color{black}

\color{black}
\subsection{Performance in Case of Radial Distribution Systems}
\label{SubSect:Distri}

We consider a radial distribution system composed of 15 nodes, two of which host a controllable generator, each being able to produce electricity for up to 1 MW. Two wind turbines with a total capacity of 1 MW are also connected to the distribution system. The complete dataset of economical and technical parameters is provided in the online companion \cite{OnlineAppendix}.

The renewable and dependent wind power generation introduces uncertainty in the operation of the system. To cope with this uncertainty, we solve the distributionally robust OPF \eqref{DRACOPF} for radial distribution systems, given the metric-based ambiguity set $\mathcal{M}_1$ and the proposed copula-based ambiguity set $\mathcal{M}_2$. We leverage the per-unit wind power deviation dataset generated in Section \ref{SubSect:OSA} for the meshed transmission case study and perform an out-of-sample analysis, with a procedure similar to the one showcased in Section \ref{SubSect:OSA}, to fairly compare different models with respect to unseen realizations of uncertainty. The formulation of the underlying deterministic real-time optimization problem is available in the online companion \cite{OnlineAppendix}. 

\begin{table}[]
\caption{\color{black} Numerical study for radial distribution systems given metric-based and copula-based approaches. The cells in blue highlight the best results obtained in terms of total operating cost. Fixed values: $N$ = 30 and $\epsilon$ = 0.05.}
\vspace{0.2cm}
\label{Table:ACOPFResults}
\centering
{\resizebox {\columnwidth} {!} {
\begin{tabular}{ll|lll}
\hline
                                  &                    & \begin{tabular}{@{}l@{}} Expected\\ cost [k\euro]\end{tabular} & \begin{tabular}{@{}l@{}} Standard \\ deviation [k\euro] \end{tabular}  & EENS [MWh]  \\ \hline
\multirow{3}{*}{$\theta_1 = 0.1$}    &  Metric-based                &  120.76             &   11.7                &   0.068                                                 \\ 
 &  $\theta_2 = 0.01$  &  120.9             &  11.7                  &  0.068                                             \\
                                  & $\theta_2 = 0.001$ &  109.1             &  11.7                  &  0.069                                                   \\ \hline
\multirow{3}{*}{$\theta_1 = 0.05$}    & Metric-based & 120.8                  & 11.7              & 0.068                                    \\ 
                & $\theta_2 = 0.001$   &  109.1             & 11.7                   &   0.069                                                \\
                                  & $\theta_2 = 0.0001$  &  107.5             &  11.6                  & 0.070                                                    \\ \hline
\multirow{3}{*}{$\theta_1 = 0.01$}   & Metric-based & 120.8                  & 11.7              & 0.068                                                     \\ 
                 & $\theta_2 = 0.1$   &  120.8             & 11.7                   &    0.068                                                 \\
                                  & $\theta_2 = 0.01$  &  \cellcolor{blue!25} 106.2             &  \cellcolor{blue!25} 11.6                  & \cellcolor{blue!25} 0.072                                                       \\ \hline
\end{tabular}}}
\end{table}

We report the results in terms of expected total operating cost and its standard deviation, as well as the Expected Energy Not Served (EENS) in MWh, in Table \ref{Table:ACOPFResults}. The EENS is an indicator that is usually chosen by the system operator to evaluate the amount of load that is expected to be shed during the corresponding time period. In addition to providing insights of constraint violation probability, this indicator embeds the information of the severity of the constraint violation.

Similarly to the results in Sections VI.C and VI.D, we observe that for given appropriate values for $\theta_1$ and $\theta_2$, e.g., $\theta_1 = 0.01$ and $\theta_2 = 0.01$, the proposed copula-based approach achieves the lowest expected total operating cost (and standard deviation) compared to the traditional metric-based approach. For a fixed value of $\theta_1$, the maximum cost saving observed in our numerical experiments may reach up to 12 \% compared to the traditional metric-based approach with the same fixed $\theta_1$, e.g., when $\theta_1 = 0.01$. Regarding the EENS, we observe a decrease of curtailed load when the parameters $\theta_1$ and $\theta_2$ increase. Recall that the Wasserstein radii $\theta_1$ and $\theta_2$ relate to the distributional robustness. This means that, when $\theta_1$ or $\theta_2$ increase, the number of distributions within the set increases, ensuing a potentially more conservative worst-case distribution. This, in turn, impacts the empirical violation probabilities (and therefore, the EENS), because the violation probability $\epsilon$ has not the same implication when evaluated for different distributions.


\color{black}

\color{black}

%


\section{Conclusion}
\label{Sect:Conclusion}

This paper introduces an ambiguity set that includes an additional Wasserstein constraint on copula, therefore capturing the whole dependence structure among all uncertain parameters. We develop a generic distributionally robust model that can be applied to any kind of decision-making optimization problems in power systems under uncertainty. In particular, we apply the proposed model to a distributionally robust day-ahead OPF problem. The results show the potential for a significant operational cost saving for the whole system, achieved by taking into account the dependence structure of the uncertain renewable energy sources. 

\color{black} As a potential path for future research, the development of a tool, e.g., using machine learning techniques, that determines \textit{a priori} the optimal values of parameters $\theta_1$ and $\theta_2$ may help the decision-maker efficiently use the proposed model. Finally, it is interesting to apply the outcome of Theorem \ref{TheoremCopula} to various short-term operational and long-term planning decision-making problems under uncertainty in power systems to further illustrate the potential benefits of the proposed model. \color{black} In particular, we highlight the extension to AC-OPF formulation for the meshed transmission networks. \color{black}

\appendices
\section{Proof of Lemma \ref{Lemma}}
\label{ProofLemmaAppendix}

\begin{proof}
\color{black} For a given argument value $\eta$, the program counts the number of historical observations $\widehat{\xi}_{ki}, \enspace  i \in \left\lbrace 1, ..., N \right\rbrace$ corresponding to the renewable power unit $k$ under which the renewable power generation is lower than $\eta$. The constraint \eqref{Conspos} imposes that the variable $z_{ki}$ takes a non-negative value whenever $\eta$ is higher than the underlying observation $\widehat{\xi}_{ki}$, and vice versa. However, the value of $z_{ki}$ is restricted to lie within 0 and 1 in \eqref{Consbound}. Therefore, the optimal value of $z_{ki}$ will mimic the  function $\mathbbm{1}_{\eta \geq \widehat{\xi}_{ki}}$. This enables the  objective function \eqref{LemmaOF} to compute the value of the empirical marginal cumulative distribution function. \color{black}
\end{proof}

\section{Proof of Theorem \ref{TheoremCopula}}
\label{ProofTheoremAppendix}

\begin{proof}
We depart from the equivalent reformulation of the worst-case expectation problem in \eqref{DRWCE}, given by
\begingroup
\small
\allowdisplaybreaks
\begin{subequations}\label{Proof1}
\begin{align}
& \min_{\alpha, \beta \geq 0, y_i} \alpha \theta_1 + \beta \theta_2 + \frac{1}{N} \sum\limits_{i = 1}^N y_i \\
& \text{ s.t.} \enspace y_i \geq \max_{\xi \in \Xi} \enspace a \left( x \right)^\top \xi + b \left( x \right) - \alpha \, \text{d} \left( \widehat{\xi}_i, \xi \right) - \beta \, \text{d}_F \left( \widehat{\xi}_i, \xi \right) \enspace \forall i. \label{Constraint1}
\end{align}
\end{subequations}
\normalsize
\endgroup

Note that \eqref{Proof1} is equivalent to the findings in \cite[Theorem 2]{Gao2017Distributionally}. For the sake of completeness, we also provide the proof of assertion \eqref{Proof1} in the online companion \cite{OnlineAppendix}. Complicating constraint \eqref{Constraint1} requires reformulations, as it contains a maximization operator over variable $\xi$ and the distance functions, which are defined in the following. \color{black} From now on, the proof will focus on the reformulation of \eqref{Constraint1}. The distance functions are defined using norms, such that 

\begingroup
\small
\allowdisplaybreaks
\begin{subequations}
\begin{align}
d \left( \widehat{\xi}_i, \widetilde{\xi} \right) = \norm{ \widehat{\xi}_i - \widetilde{\xi}} \text{ and } \text{d}_F \left( \widehat{\xi}_i, \widetilde{\xi} \right) = \norm{ F \left( \widehat{\xi}_i \right) - F \left( \widetilde{\xi} \right) }
\end{align}
\end{subequations}
\normalsize
\endgroup
%
%
\noindent \color{black} where $F(\widehat{\xi}_i) =  \left( F_1 \left( \widehat{\xi}_{1i} \right), ..., F_k \left( \widehat{\xi}_{ki} \right), ..., F_{\vert \mathcal{W} \vert} \left( \widehat{\xi}_{\vert \mathcal{W} \vert i} \right) \right)^\top$ and $F(\widetilde{\xi}) =  \left( F_1 \left( \widetilde{\xi}_{1} \right), ..., F_k \left( \widetilde{\xi}_{k} \right), ..., F_{\vert \mathcal{W} \vert} \left( \widetilde{\xi}_{\vert \mathcal{W} \vert} \right) \right)^\top$ are vectors in $\mathbb{R}^{\vert \mathcal{W} \vert}$. In other words, each component of $\widehat{\xi}_i$ or $\widetilde{\xi}$ is given as argument of its corresponding marginal cumulative distribution function. Note that the resulting vector $F\left( \widehat{\xi}_i \right)$ is a sample of the copula, which can be evaluated \textit{a priori} using \eqref{CopulaEq}. Note also that $F \left( \widetilde{\xi} \right)$ is a decision variable related to the variations within the variable copula, which requires further reformulations. \color{black} Using such distance definitions, \eqref{Constraint1} can be recast into

\begingroup
\small
\allowdisplaybreaks
\begin{align} y_i \geq \max_{\widetilde{\xi} \in \Xi} \enspace a \left( x \right)^\top \widetilde{\xi} + b \left( x \right) - \alpha \norm{ \widehat{\xi}_i - \widetilde{\xi} } - \beta \norm{ F \left( \widehat{\xi}_i \right) - F \left( \widetilde{\xi} \right) }.
\end{align}
\normalsize
\endgroup

To get rid of the norms inside the objective function of the inner maximization problem, we use dual norms ($ \norm{ x }_\ast = \max_{\norm{v} \leq 1} v^\top x $) as 

\begingroup
\small
\allowdisplaybreaks
\begin{align}
y_i \geq & \max_{\widetilde{\xi} \in \Xi} a \left( x \right)^\top \widetilde{\xi} + b \left( x \right)  - \alpha  \max_{ \norm{\zeta_{i}^{(1)} }_\ast \leq 1} {\zeta_{i}^{(1)}}^\top \left( \widehat{\xi}_i - \widetilde{\xi} \right) \nonumber \\ & - \beta \max_{ \norm{\zeta_{i}^{(2)} }_\ast \leq 1} {\zeta_{i}^{(2)}}^\top \left( F \left( \widehat{\xi}_i \right) - F \left( \widetilde{\xi} \right) \right) \enspace \forall i.  \label{Proof2}
\end{align}
\normalsize
\endgroup

Next, we eliminate the maximization operators on variables $\zeta_{i}^{(1)}$ and $\zeta_{i}^{(2)}$ by (\textit{i}) switching the $- \max$ to a $\min -$, (\textit{ii}) moving the resulting minimization operators to the left, (\textit{iii}) merging the min operators over variables $\zeta_{i}^{(1)}$ and $\zeta_{i}^{(2)}$, \color{black} (\textit{iv}) permuting with the max operator over $\widetilde{\xi}$ (which is allowed using the reformulation given by Lemma 1, because the objective function is linear and the feasible sets are convex and independent), and finally (\textit{v}) dropping the min operator from the right-hand side of the $\geq$ constraint. The variables $\zeta_{i}^{(1)}$ and $\zeta_{i}^{(2)}$ are added to the overall set of decision variables and the constraints are added to the overall set of constraints, such that


\begingroup
\small
\allowdisplaybreaks
\begin{subequations}
\begin{align}
& \begin{aligned} y_i \geq & \max_{\widetilde{\xi} \in \Xi} \enspace a \left( x \right)^\top \widetilde{\xi} + b \left( x \right) - \alpha {\zeta_{i}^{(1)}}^\top \left( \widehat{\xi}_i - \widetilde{\xi} \right) \\ & - \beta {\zeta_{i}^{(2)}}^\top \left( F \left( \widehat{\xi}_i \right) - F \left( \widetilde{\xi} \right)  \right) \enspace \forall i  \end{aligned} \\
& \norm{ \zeta_{i}^{(1)} }_\ast \leq 1 \enspace \forall i \\
& \norm{ \zeta_{i}^{(2)} }_\ast \leq 1 \enspace \forall i,
\end{align}
\end{subequations}
\normalsize
\endgroup
\noindent is equivalent to \eqref{Constraint1}. With the changes of variables $\alpha \zeta_{i}^{(1)} \rightarrow \zeta_{i}^{(1)}$ and $\beta \zeta_{i}^{(2)} \rightarrow \zeta_{i}^{(2)}$, \color{black} the constraints become
\begingroup
\small
\allowdisplaybreaks
\begin{subequations}
\begin{align}
y_i \geq & \max_{\xi \in \Xi} \enspace a \left( x \right)^\top \xi + b \left( x \right) - {\zeta_{i}^{(1)}}^\top \left( \xi_i - \xi \right) \nonumber \\ 
& - {\zeta_{i}^{(2)}}^\top \left( F \left( \xi_i \right) - F \left( \xi \right)  \right) \enspace \forall i  \\
& \norm{ \zeta_{i}^{(1)} }_\ast \leq \alpha \enspace \forall i  \label{alphaconstraint1} \\
& \norm{ \zeta_{i}^{(2)} }_\ast \leq \beta \enspace \forall i. \label{betaconstraint1}
\end{align}
\end{subequations}
\normalsize
\endgroup

We use Lemma \ref{Lemma} to reformulate the functions $F \left( \xi \right)$. This key step allows us to make the link between the variable distribution and the variable copula, by using the empirical marginal cumulative distributions (see Remark 1). The vector $F \left( \xi \right)$ now becomes \eqref{OptiVector} which completes the proof. \end{proof}

\color{black}

\color{black}
\section{Solution Approach}
\label{SolutionApproachAppendix}


We aim to reformulate problem \eqref{FinalTheorem}, especially constraint \eqref{eq1}. The first step of the reformulation consists in getting rid of the maximization operators within $F \left( \xi \right)$. In that direction, we derive the optimality conditions related to the optimization problems in \eqref{OptiVector} and add them into the constraints of the outer maximization problem. We observe that the strong duality theorem holds for the underlying optimization problems. Therefore, the necessary and sufficient optimality conditions for the $k$-th element of vector $F \left( \xi \right)$ in \eqref{OptiVector} are composed of \textit{(i)} the primal constraints, \textit{(ii)} the dual constraints, and \textit{(iii)} the strong duality equality. This mathematically translates to the following set of constraints:

\begingroup
\allowdisplaybreaks
\small
\begin{subequations}
\begin{align}
    & z_{jki} \left( \xi_k - \widehat{\xi}_{kj} \right) \geq 0 \enspace \forall j \\
    & 0 \leq z_{jki} \leq 1 \enspace \forall j \\
    & \frac{1}{N} + \sigma_{kji} \left( \xi_k - \widehat{\xi}_{kj} \right) - \pi_{kji} \leq 0 \enspace \forall j \\
    & \sigma_{kji}, \pi_{kji} \geq 0 \enspace \forall j \\
    & \frac{1}{N} \sum\limits_{j=1}^{N} z_{kji} = \sum\limits_{j=1}^{N} \pi_{kji}.
\end{align}
\end{subequations}
\normalsize
\endgroup

By doing so, the constraint \eqref{eq1} becomes 

\begingroup
\small
\allowdisplaybreaks
\begin{subequations}\label{bilinear}
\begin{align}
& y_i \geq \left\lbrace \begin{aligned} & \max_{\xi, z_{kji}, \sigma_{kji}, \pi_{kji}} \enspace a \left( x \right)^\top \xi + b \left( x \right) - {\zeta_{i}^{(1)}}^\top \left( \widehat{\xi}_i - \xi \right) \\ & - {\zeta_{i}^{(2)}}^\top \left( F \left( \widehat{\xi}_i \right) - \frac{1}{N} \left( \begin{aligned} & \sum\limits_{j=1}^N z_{j1i} \\ & \hphantom{--} \vdots \\ & \sum\limits_{j=1}^N z_{j\vert \mathcal{W} \vert i}\end{aligned} \right) \right)  \\
& \text{ s.t.} \enspace C \xi \leq D \\
& \hphantom{\text{ s.t.}} \enspace z_{jki} \left( \xi_k - \widehat{\xi}_{kj} \right) \geq 0 \enspace \forall k,j \label{BilinearConstraint}\\
& \hphantom{\text{ s.t.}} \enspace 0 \leq z_{jki} \leq 1 \enspace \forall k,j \\
& \hphantom{\text{ s.t.}} \enspace \frac{1}{N} + \sigma_{kji} \left( \xi_k - \widehat{\xi}_{kj} \right) - \pi_{kji} \leq 0 \enspace \forall k,j \\
    & \hphantom{\text{ s.t.}} \enspace \sigma_{kji}, \pi_{kji} \geq 0 \enspace \forall k,j \\
    & \hphantom{\text{ s.t.}} \enspace \frac{1}{N} \sum\limits_{j=1}^{N} z_{kji} = \sum\limits_{j=1}^{N} \pi_{kji} \enspace \forall k
\end{aligned} \right\rbrace \enspace \forall i,
\end{align}
\end{subequations}
\normalsize
\endgroup

\noindent where the inner maximization operator has been dropped and the optimality conditions have been added to the set of constraints of the outer maximization problem. We write the support as $\left\lbrace \widetilde{\xi} \in \mathbb{R}^{\vert \mathcal{W} \vert} \, \middle\vert \, C \widetilde{\xi} \leq D \right\rbrace$, where $C \in \mathbb{R}^{2\vert \mathcal{W} \vert \times \vert \mathcal{W} \vert }$ and $D \in \mathbb{R}^{\vert \mathcal{W} \vert}$. We notice that the resulting outer maximization problem contains bilinear terms in the form of $z_{jki} \xi_k$ and $\sigma_{kji} \xi_k $. We use the McCormick relaxation of bilinear terms to restore linearity, such that the constraint can be cast into

\begingroup
\small
\allowdisplaybreaks
\begin{subequations}\label{bilinearNew}
\begin{align}
& y_i \geq \left\lbrace \begin{aligned} & \max_{\xi, z_{kji}, \sigma_{kji}, \pi_{kji}} \enspace a \left( x \right)^\top \xi + b \left( x \right) - {\zeta_{i}^{(1)}}^\top \left( \widehat{\xi}_i - \xi \right) \\ & - {\zeta_{i}^{(2)}}^\top \left( F \left( \widehat{\xi}_i \right) - \frac{1}{N} \left( \begin{aligned} & \sum\limits_{j=1}^N z_{j1i} \\ & \hphantom{--} \vdots \\ & \sum\limits_{j=1}^N z_{j\vert \mathcal{W} \vert i}\end{aligned} \right) \right)  \\
& \text{ s.t.} \enspace C \xi \leq D \\
& \hphantom{\text{ s.t.}} \enspace t_{jki} - z_{jki} \widehat{\xi}_{kj} \geq 0 \enspace \forall k,j \label{BilinearConstraint}\\
& \hphantom{\text{ s.t.}} \enspace t_{jki} \geq z_{jki} \xi_k^{\text{min}} \enspace \forall k,j \\
& \hphantom{\text{ s.t.}} \enspace t_{jki} \geq \xi_k + z_{kji} \xi_k^{\text{max}} - \xi_k^{\text{max}} \enspace \forall k,j \\
& \hphantom{\text{ s.t.}} \enspace t_{jki} \leq \xi_k + z_{kji} \xi_k^{\text{min}} - \xi_k^{\text{min}} \enspace \forall k,j \\
& \hphantom{\text{ s.t.}} \enspace t_{jki} \geq z_{jki} \xi_k^{\text{max}} \enspace \forall k,j \\
& \hphantom{\text{ s.t.}} \enspace 0 \leq z_{jki} \leq 1 \enspace \forall k,j \\
& \hphantom{\text{ s.t.}} \enspace \frac{1}{N} + v_{jki} - \sigma_{kji} \widehat{\xi}_{kj} - \pi_{kji} \leq 0 \enspace \forall k,j \\
& \hphantom{\text{ s.t.}} \enspace v_{jki} \geq \sigma_{kji} \xi_k^{\text{min}} \enspace \forall k,j \\
& \hphantom{\text{ s.t.}} \enspace v_{jki} \geq \overline{V} \xi_k + \sigma_{kji} \xi_k^{\text{max}} - \overline{V} \xi_k^{\text{max}} \enspace \forall k,j \\
& \hphantom{\text{ s.t.}} \enspace v_{jki} \leq \overline{V} \xi_k + \sigma_{kji} \xi_k^{\text{min}} - \overline{V} \xi_k^{\text{min}} \enspace \forall k,j \\
& \hphantom{\text{ s.t.}} \enspace v_{jki} \geq \sigma_{kji} \xi_k^{\text{max}} \enspace \forall k,j \\
    & \hphantom{\text{ s.t.}} \enspace \sigma_{kji}, \pi_{kji} \geq 0 \enspace \forall k,j \\
    & \hphantom{\text{ s.t.}} \enspace \frac{1}{N} \sum\limits_{j=1}^{N} z_{kji} = \sum\limits_{j=1}^{N} \pi_{kji} \enspace \forall k
\end{aligned} \right\rbrace \enspace \forall i.
\end{align}
\end{subequations}
\normalsize
\endgroup

Note that the vectors $ \widetilde{\xi}^{\text{max}} \in \mathbb{R}^{\vert \mathcal{W}\vert}$ and $ \widetilde{\xi}^{\text{min}} \in \mathbb{R}^{\vert \mathcal{W}\vert}$ correspond to the maximum and minimum thresholds for the uncertain parameters $\widetilde{\xi}$. In the case of renewable energy sources, $\widetilde{\xi}^{\text{min}}$ and $\widetilde{\xi}^{\text{max}}$ are related to zero and the installed capacity, respectively. We now focus on getting rid of the outer maximization operator. The final step of the reformulation is to dualize \eqref{bilinearNew}, such that the problem becomes a minimization and the operator can be dropped from the constraint. The following problem is a tractable reformulation of \eqref{FinalTheorem}: \vspace{-0.35cm}

\begingroup
\small
\allowdisplaybreaks
\begin{subequations}\label{final2}
\begin{align}
& \min_{\Pi} \enspace \alpha \theta_1 + \beta \theta_2 + \frac{1}{N} \sum\limits_{i = 1}^N y_i \label{TheoremOF2}\\
& \text{s.t.} \enspace y_i \geq b \left( x \right) - {\zeta_{i}^{(1)}}^\top \widehat{\xi}_i - {\zeta_{i}^{(2)}}^\top F \left( \widehat{\xi}_i \right) + {\mu^{(0)}}^\top d \notag\\ 
& \hphantom{\text{---------}} + \sum\limits_{k=1}^{\vert \mathcal{W} \vert} \sum\limits_{j=1}^{N} \left( \mu_{kji}^{(3)} \widetilde{\xi}_k^\text{max} - \mu_{kji}^{(4)} \widetilde{\xi}_k^\text{min} + \mu_{kji}^{(6)} + \frac{1}{N} \mu_{kji}^{(7)} + \right. \notag \\
& \hphantom{\text{---------------------------}} \left. \mu_{kji}^{(9)} \overline{V} \xi_k^{\text{max}} - \mu_{kji}^{(10)} \overline{V} \xi_k^{\text{min}}  \right) \enspace \forall i \numberthis \\
& \enspace a_k\left( x \right) + \zeta_{ik}^{(1)} - C_k^\top \mu^{(0)} \notag\\ 
& \hphantom{\text{---------}} +  \sum\limits_{j=1}^{N} \left( \mu_{kji}^{(3)} + \mu_{kji}^{(4)} + \overline{V} \mu_{kji}^{(10)} - \overline{V} \mu_{kji}^{(9)} \right) = 0 \enspace \forall k, i \\ 
& \enspace  \frac{1}{N} \zeta_{ik}^{(2)} - \mu_{kji}^{(1)} \widehat{\xi}_{kj} - \mu_{kji}^{(2)} \widetilde{\xi}_k^{\text{min}} - \mu_{kji}^{(3)} \widetilde{\xi}_k^{\text{max}} \notag \\ 
& \hphantom{\text{---------}} + \mu_{kji}^{(4)} \widetilde{\xi}_k^{\text{min}} + \mu_{kji}^{(5)} \widetilde{\xi}_k^{\text{max}} - \mu_{kji}^{(6)} + \frac{1}{N} \lambda_k \leq 0 \enspace \forall k,j,i \numberthis \\
& \enspace \mu_{kji}^{(1)} + \mu_{kji}^{(2)} + \mu_{kji}^{(3)} - \mu_{kji}^{(4)} - \mu_{kji}^{(5)} = 0 \enspace \forall k, j, i \\ 
& \enspace - \mu_{kji}^{(7)} + \mu_{kji}^{(8)} + \mu_{kji}^{(9)} - \mu_{kji}^{(10)} - \mu_{kji}^{(11)} = 0 \enspace \forall k, j, i \\ 
& \enspace \mu_{kji}^{(7)} -\lambda_k = 0 \enspace \forall k, j, i \\ 
& \enspace \mu_{kji}^{(7)} - \mu_{kji}^{(8)} \xi_k^\text{min} - \mu_{kji}^{(9)} \xi_k^\text{max} + \mu_{kji}^{(10)} \xi_k^\text{min} + \mu_{kji}^{(11)} \xi_k^\text{max} \leq 0 \enspace  \\ 
&  \enspace \norm{ \zeta_{i}^{(1)} }_\ast \leq \alpha \enspace \forall i \\ 
& \enspace \norm{ \zeta_{i}^{(2)} }_\ast \leq \beta \enspace \forall i.
\end{align}
\end{subequations}
\normalsize
\endgroup

\bibliographystyle{IEEEtran}
\bibliography{bare_jrnl}

\begin{thebibliography}{10}
\providecommand{\url}[1]{#1}
\csname url@samestyle\endcsname
\providecommand{\newblock}{\relax}
\providecommand{\bibinfo}[2]{#2}
\providecommand{\BIBentrySTDinterwordspacing}{\spaceskip=0pt\relax}
\providecommand{\BIBentryALTinterwordstretchfactor}{4}
\providecommand{\BIBentryALTinterwordspacing}{\spaceskip=\fontdimen2\font plus
\BIBentryALTinterwordstretchfactor\fontdimen3\font minus
  \fontdimen4\font\relax}
\providecommand{\BIBforeignlanguage}[2]{{%
\expandafter\ifx\csname l@#1\endcsname\relax
\typeout{** WARNING: IEEEtran.bst: No hyphenation pattern has been}%
\typeout{** loaded for the language `#1'. Using the pattern for}%
\typeout{** the default language instead.}%
\else
\language=\csname l@#1\endcsname
\fi
#2}}
\providecommand{\BIBdecl}{\relax}
\BIBdecl

\bibitem{NERC}
{The North American Electric Reliability Corporation (NERC)}, ``Special report:
  Potential reliability impacts of emerging flexible resources,'' 2010,
  [Online]. Available: https://www.nerc.com/files/IVGTF\_Task\_1\_5\_Final.pdf.

\bibitem{Morales2013Integrating}
J.~M. Morales, A.~J. Conejo, H.~Madsen, P.~Pinson, and M.~Zugno,
  \emph{Integrating Renewables in Electricity Markets}.\hskip 1em plus 0.5em
  minus 0.4em\relax Springer, 2013.

\bibitem{juanmi}
J.~M. Morales, A.~J. Conejo, and J.~Perez-Ruiz, ``Economic valuation of
  reserves in power systems with high penetration of wind power,'' \emph{IEEE
  Trans. Power Syst.}, vol.~24, no.~2, pp. 900--910, 2009.

\bibitem{bertsimas2}
D.~Bertsimas, E.~Litvinov, X.~A. Sun, J.~Zhao, and T.~Zheng, ``Adaptive robust
  optimization for the security constrained unit commitment problem,''
  \emph{IEEE Trans. Power Syst.}, vol.~28, no.~1, pp. 52--63, 2013.

\bibitem{Lubin}
M.~Lubin, Y.~Dvorkin, and S.~Backhaus, ``A robust approach to chance
  constrained optimal power flow with renewable generation,'' \emph{IEEE Trans.
  Power Syst.}, vol.~31, no.~5, pp. 3840--3849, 2016.

\bibitem{Christos}
C.~Ordoudis, V.~A. Nguyen, D.~Kuhn, and P.~Pinson, ``Energy and reserve
  dispatch with distributionally robust joint chance constraints,'' \emph{Oper.
  Res. Lett.}, vol.~49, no.~3, pp. 291--299, 2021.

\bibitem{Kuhn2}
D.~Kuhn, P.~{Mohajerin Esfahani}, V.~A. Nguyen, and S.~Shafieezadeh-Abadeh,
  ``Wasserstein distributionally robust optimization: {T}heory and applications
  in machine learning,'' \emph{INFORMS Tut. Oper. Res. Manage. Sci. Age Anal.},
  pp. 130--166, 2019.

\bibitem{Keynes1921}
J.~M. Keynes, \emph{A Treatise on Probability}.\hskip 1em plus 0.5em minus
  0.4em\relax MacMillan \& Co., 1921, {L}ondon, {UK}.

\bibitem{Kuhn}
P.~{Mohajerin Esfahani} and D.~Kuhn, ``Data-driven distributionally robust
  optimization using the {W}asserstein metric: {P}erformance guarantees and
  tractable reformulations,'' \emph{Math. Prog.}, vol. 171, no. 1-2, pp.
  115--166, 2017.

\bibitem{Arrigo2020Wasserstein}
A.~Arrigo, C.~Ordoudis, J.~Kazempour, Z.~{De Grève}, J.-F. Toubeau, and
  F.~Vallée, ``Wasserstein distributionally robust chance-constrained
  optimization for energy and reserve dispatch: {A}n exact and
  physically-bounded formulation,'' \emph{Eur. J. Oper. Res.}, vol. 296, no.~1,
  pp. 304--322, 2022.

\bibitem{li2018value}
B.~Li, R.~Jiang, and J.~L. Mathieu, ``Integrating unimodality into
  distributionally robust optimal power flow,'' \emph{{TOP}}, vol.~30, pp.
  594--617, 2022.

\bibitem{Esteban2019Data}
A.~Esteban-Pérez and J.~M. Morales, ``Partition-based distributionally robust
  optimization via optimal transport with order cone constraints,''
  \emph{4OR-Q. J. Oper. Res.}, 2021.

\bibitem{Wang2018Risk}
C.~{Wang}, R.~{Gao}, F.~{Qiu}, J.~{Wang}, and L.~{Xin}, ``Risk-based
  distributionally robust optimal power flow with dynamic line rating,''
  \emph{IEEE Trans. Power Syst.}, vol.~33, no.~6, pp. 6074--6086, 2018.

\bibitem{powertech}
A.~Arrigo, J.~Kazempour, Z.~{De Gr\`{e}ve}, J.-F. Toubeau, and F.~Vall\'{e}e,
  ``Enhanced {W}asserstein distributionally robust {OPF} with dependence
  structure and support information,'' in \emph{IEEE PowerTech 2021
  Conference}, 2021, {Madrid, Spain}.

\bibitem{Gao2017DataDrivenRO}
\BIBentryALTinterwordspacing
R.~Gao and A.~Kleywegt, ``Data-driven robust optimization with known marginal
  distributions,'' 2017. [Online]. Available:
  \url{https://faculty.mccombs.utexas.edu/rui.gao/copula.pdf}
\BIBentrySTDinterwordspacing

\bibitem{Pflug2018Review}
G.~C. Pflug and M.~Pohl, ``A review on ambiguity in stochastic portfolio
  optimization,'' \emph{Set-Valued Var. Anal.}, vol.~26, p. 733–757, 2018.

\bibitem{Nelsen1999Introduction}
R.~B. Neslen, \emph{An Introduction to Copulas}.\hskip 1em plus 0.5em minus
  0.4em\relax Springer, 1999, {New York, USA}.

\bibitem{Sklar1959}
A.~Sklar, ``Fonctions de répartition à n dimensions et leurs marges,''
  \emph{Publications de l'institut de statistiques de l'université de Paris},
  vol.~8, pp. 229--231, 1959.

\bibitem{Gao2017Distributionally}
\BIBentryALTinterwordspacing
R.~Gao and A.~J. Kleywegt, ``Distributionally robust stochastic optimization
  with dependence structure,'' 2017. [Online]. Available:
  \url{https://arxiv.org/abs/1701.04200}
\BIBentrySTDinterwordspacing

\bibitem{Vallee2007Impact}
F.~Vallée, J.~Lobry, and O.~Deblecker, ``Impact of the wind geographical
  correlation level for reliability studies,'' \emph{IEEE Trans. Power Syst.},
  vol.~22, no.~4, pp. 2232--2239, 2007.

\bibitem{Guo2019Data}
Y.~{Guo}, K.~{Baker}, E.~{Dall'Anese}, Z.~{Hu}, and T.~H. {Summers},
  ``Data-based distributionally robust stochastic optimal power flow -- {Part
  I: M}ethodologies,'' \emph{IEEE Trans. Power Syst.}, vol.~34, no.~2, pp.
  1483--1492, 2019.

\bibitem{WassersteinDef}
L.~V. Kantorovich and G.~S. Rubinshtein, ``On a space of totally additive
  functions,'' \emph{Vestnik Leningradskogo Universiteta}, vol.~13, pp. 52--59,
  1958.

\bibitem{OnlineAppendix}
A.~Arrigo, J.~Kazempour, Z.~{De Grève}, J.-F. Toubeau, and F.~Vallée,
  ``Online companion: {E}mbedding dependencies between wind farms in
  distributionally robust optimal power flow,'' 2021, available:
  \url{https://doi.org/10.5281/zenodo.6627137}.

\bibitem{Christie2000}
R.~D. Christie, B.~F. Wollenberg, and I.~Wangensteen, ``Transmission management
  in the deregulated environment,'' \emph{Proceedings of the {IEEE}}, vol.~88,
  no.~2, pp. 170--195, 2000.

\bibitem{Baker}
K.~Baker, ``Solutions of {DC OPF} are never {AC} feasible,'' \emph{Proceedings
  of the Twelfth {ACM} International Conference on Future Energy Systems}, p.
  264–268, 2021.

\bibitem{MoB2}
R.~Mieth and Y.~Dvorkin, ``Data-driven distributionally robust optimal power
  flow for distribution systems,'' \emph{IEEE Control Syst. Lett.}, vol.~2,
  no.~3, pp. 363--368, July 2018.

\bibitem{GOComp}
\BIBentryALTinterwordspacing
{ARPA-E}, \emph{{GO} Competition}, 2022. [Online]. Available:
  \url{https://gocompetition.energy.gov/about-competition}
\BIBentrySTDinterwordspacing

\bibitem{LDR}
D.~Kuhn, W.~Wiesemann, and A.~Georghiou, ``Primal and dual linear decision
  rules in stochastic and robust optimization,'' \emph{Math. Prog.}, vol. 130,
  no.~1, pp. 177--209, 2011.

\bibitem{Lindistflow}
M.~E. Baran and F.~F. Wu, ``Optimal capacitor placement on radial distribution
  systems,'' \emph{IEEE Trans. on Power Del.}, vol.~4, no.~1, pp. 725--734,
  January 1989.

\bibitem{Zymler}
S.~Zymler, D.~Kuhn, and B.~Rustem, ``Distributionally robust joint chance
  constraints with second-order moment information,'' \emph{Math. Prog.}, vol.
  137, no. 1-2, pp. 167--198, 2013.

\bibitem{24IEEEChristos}
\BIBentryALTinterwordspacing
C.~Ordoudis, P.~Pinson, J.~M. Morales, and M.~Zugno, ``An updated version of
  the {IEEE} {RTS} 24-bus system for electricity market and power system
  operation studies,'' 2016. [Online]. Available:
  \url{http://orbit.dtu.dk/files/120568114/An}
\BIBentrySTDinterwordspacing

\end{thebibliography}

\end{document}